\newcommand{\Mod}[1]{\ (\mathrm{mod}\ #1)}
\crefname{theorem}{Theorem}{Theorems}
\crefname{thm}{Theorem}{Theorems}
\crefname{mainthm}{Theorem}{Theorems}
\crefname{lemma}{Lemma}{Lemmas}
\crefname{lem}{Lemma}{Lemmas}
\crefname{remark}{Remark}{Remarks}
\crefname{claim}{Claim}{Claims}
\crefname{subclaim}{Sub-claim}{Sub-claims}
\crefname{prop}{Proposition}{Propositions}
\crefname{proposition}{Proposition}{Propositions}
\crefname{defn}{Definition}{Definitions}
\crefname{corollary}{Corollary}{Corollaries}
\crefname{conjecture}{Conjecture}{Conjectures}
\crefname{question}{Question}{Questions}
\crefname{chapter}{Chapter}{Chapters}
\crefname{section}{Section}{Sections}
\crefname{figure}{Figure}{Figures}
\theoremstyle{plain}
\newtheorem{thm}{Theorem}
\newtheorem*{thm*}{Theorem}
\newtheorem{lemma}[thm]{Lemma}
\theoremstyle{definition}
\newtheorem{defn}[thm]{Definition}
\theoremstyle{remark}
\newtheorem*{remark}{Remark}
\newcommand{\eps}{\varepsilon}
\newcommand{\R}{{\mathbb R}}
\newcommand{\Z}{{\mathbb Z}}
\newcommand{\cS}{{\mathcal S}}
\newcommand{\cG}{{\mathcal G}}
\newcommand{\cX}{{\mathcal X}}
\newcommand{\cT}{{\mathcal T}}
\newcommand{\cK}{{\mathcal K}}
\newcommand{\cY}{{\mathcal Y}}
\newcommand{\sgn}{\textrm{sgn}}
\newcommand{\tour}{{\rm Tour}}
\newcommand{\score}{{\rm Score}}
\newcommand{\win}{{\rm Win}}
\newcommand{\ig}{{\rm IntGr}}
\renewcommand{\le}{\leqslant}
\renewcommand{\ge}{\geqslant}
\renewcommand\ell{l}
\author[B. Kolesnik]{Brett Kolesnik}
\address{Department of Statistics, University of Warwick}
\email{brett.kolesnik@warwick.ac.uk}
\author[R. Mitchell]{Rivka Mitchell}
\address{Department of Mathematics, University of Oxford}
\email{rivka.mitchell@maths.ox.ac.uk}
\author[T. Przyby{\l}owski]{Tomasz Przyby{\l}owski}
\address{Department of Mathematics, University of Oxford}
\email{przybylowski@maths.ox.ac.uk}
\keywords{Coxeter permutahedra, 
digraph, 
graphical zonotope, 
majorization, 
oriented graph, 
paired comparisons, 
permutahedron, 
root system, 
score sequence, 
signed graph, 
tournament,
weak majorization}
\subjclass[2010]{05C20,	
11P21,	
17B22,	
20F55,	
51M20,	
52B05,	
62J15}	
\begin{document}

\title
[Coxeter interchange graphs]
{Coxeter interchange graphs}

\begin{abstract}

Brualdi and Li introduced 
tournament interchange graphs. 
In such a graph, each vertex represents a tournament. 
Traversing an edge corresponds to reversing 
a cyclically directed triangle. 
Such a triangle is neutral, in 
that its reversal does not 
affect the score sequence. 
An interchange graph encodes the combinatorics
of the set of tournaments with a given score sequence, 
or equivalently, of a given fiber of the classical permutahedron
from discrete geometry. 

Coxeter tournaments were introduced by the 
first author and Sanchez, in relation to the 
Coxeter permutahedra in Ardila, Castillo, Eur and Postnikov.
Coxeter tournaments have collaborative and solitaire games, 
in addition to the usual competitive games
in classical tournaments. 

We introduce 
Coxeter interchange graphs. 
These graphs are more intricate, 
as there are multiple neutral structures
at play, which interact with one another. 
Our main result shows that the
Coxeter interchange graphs are regular,
and we describe the degree 
geometrically, 
in terms of 
distances in the 
Coxeter permutahedra. 
We also characterize the set of  
score sequences of Coxeter tournaments, 
generalizing a classical result of Landau.

\end{abstract}

\maketitle

\begin{figure}[h!]
\centering
\includegraphics[scale=1]{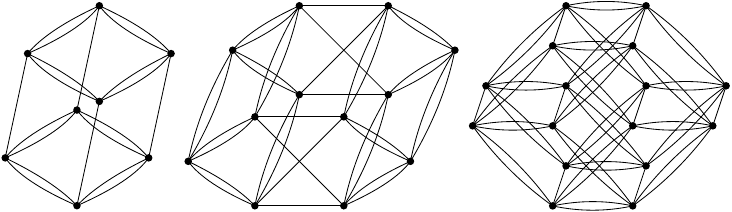}
\caption{
For a root system of type $\Phi$, 
the permutahedron $\Pi(\Phi)$ 
is the convex hull of the orbit of the Weyl vector 
${\bf s}(\Phi)$ under the Weyl group. 
In an interchange graph,  
vertices represent tournaments with 
score sequence ${\bf s}$,   
and edges correspond to the reorientation of 
one of the small 
neutral structures in \cref{F_genD,F_genB,F_genC}. 
\cref{T_MainCount} 
shows that
the degree is $(\| {\bf s}(\Phi)\|^2-\| {\bf s}\|^2)/2$. 
The graphs above arise in type $C_3$
for ${\bf s}=(2,0,0)$, $(-1,0,1)$ and
$(0,0,0)$, where 
${\bf s}(C_3)=(1,2,3)$. 
The degrees (5, 6 and 7)
increase
as ${\bf s}$ approaches the center $(0,0,0)$
of $\Pi(C_3)$ in \cref{F_permC}. 
}
\label{F_IGs}
\end{figure}

\newpage

\section{Background}
\label{S_back}

\subsection{Tournaments}
\label{S_tour}

A {\it tournament} is an orientation of the complete graph $K_n$. 
We encode a tournament 
$T=\{w_{ij}:i>j\}$ using values $w_{ij}\in\{0,1\}$. 
If $w_{ij}=1$ we orient the edge $\{i,j\}$ 
as $i\to j$, 
and otherwise $i\leftarrow j$ if $w_{ij}=0$. 
We think of vertices as players  
and edges as competitive games, 
directed away from the winner. 
The  {\it win sequence} 
\[
{\bf w}(T)=\sum_{i>j}[w_{ij}{\bf e}_i+(1-w_{ij}){\bf e}_j]
\]
lists the total number of wins by each player, 
where ${\bf e}_i\in\Z^n$ are the usual basis vectors. 

The {\it standard win sequence,} corresponding to the {\it transitive} 
(acyclic) tournament, 
in which all $w_{ij}=1$ 
(that is, player $i$ wins against all players $j<i$) is denoted by  
\[
{\bf w}_n= (0,1,\ldots,n-1). 
\]

The permutahedron $\Pi_{n-1}$ is 
a classical object in discrete geometry, obtained as 
the convex hull of 
${\bf w}_n$ and its permutations. 
Connections between tournaments
and the geometry of $\Pi_{n-1}$ are well known; 
see Stanley \cite{S80}
(and cf.\ \cite{KS24}). 
Notably, by Rado \cite{Rad52} 
and Landau \cite{Lan53}, 
the set $\win(n)$
of all win sequences
is the set 
$\Z^n\cap \Pi_{n-1}$
of all lattice points
in $\Pi_{n-1}$. 

For ${\bf x}, {\bf y}\in\R^n$, we say that ${\bf x}$ {\it majorizes} ${\bf y}$, and 
write ${\bf x}\preceq {\bf y}$, 
if $\sum_i x_i=\sum_i y_i$ and 
$\sum_{i=1}^k ({\bf x}^\downarrow)_i\le \sum_{i=1}^k ({\bf y}^\downarrow)_i$, 
for all $1\le k\le n$. 
Rado \cite{Rad52}  showed that $\Pi_{n-1}=\{{\bf x}\in\R^n:{\bf x}\preceq{\bf w}_n\}$
and Landau \cite{Lan53} proved that 
${\bf w}\in\Z^n$ is a win sequence if and only if ${\bf w}\preceq{\bf w}_n$. 
Therefore, $\win(n)=\Z^n\cap \Pi_{n-1}$.

Let ${\bf 1}_n=(1,\ldots,1)\in\Z^n$. 
We will sometimes make a linear shift, 
and consider the {\it score sequence} 
\[
{\bf s}(T)
={\bf w}(T)-\frac{n-1}{2}{\bf 1}_n 
=\sum_{i>j}(w_{ij}-1/2)({\bf e}_i-{\bf e}_j). 
\]
This corresponds to awarding $\pm1/2$ point for each win/loss, 
and centers the permutahedron at ${\bf 0}_n=(0,\ldots,0)\in\Z^n$. 
In particular, 
we let 
\[
{\bf s}_n
={\bf w}_n-\frac{n-1}{2}{\bf 1}_n
\] 
denote the {\it standard score sequence}.

\subsection{Interchange graphs}

Although the set $\score(n)$ of all score sequences has a 
simple geometric description (shifted lattice points in $\Pi_{n-1}$), 
the set $\tour(n,{\bf s})$ of all tournaments with given 
score sequence ${\bf s}$
is combinatorially rich. It seems difficult to fully describe its
structure in general.  
For instance, precise asymptotics for the size of 
$\tour(n,{\bf s})$ are known only when  ${\bf s}$
is close to ${\bf 0}_n$; see Spencer 
\cite{Spe74} and McKay et al.\ \cite{McKay90,McKW96,IIMcK20}. 

If ${\bf s}(T)={\bf 0}_n$, 
we say that $T$ is {\it neutral}. 
The {\it cyclic triangle} $\Delta_c$ is the smallest
non-trivial neutral tournament. In this tournament, each of the three players
wins exactly one game against the other two, and so ${\bf s}(\Delta_c)={\bf 0}_3$. 
For a tournament $T$ with a copy $\Delta\subseteq T$ of $\Delta_c$, we let
$T*\Delta$ denote the tournament obtained from $T$ by {\it reversing}
the orientation of all directed edges in $\Delta$. Since $\Delta$ is neutral, 
it follows that ${\bf s}(T*\Delta)={\bf s}(T)$.

Brualdi and Li \cite{BL84} studied the {\it interchange graph} 
$\ig(n,{\bf s})$ 
which encodes the combinatorics of $\tour(n,{\bf s})$. 
In this graph, 
there is a vertex $v(T)$ for each 
$T\in \tour(n,{\bf s})$. 
Vertices $v(T_1),v(T_2)$ are joined by an edge
if $T_2=T_1* \Delta$. 
The graph 
$\ig(n,{\bf s})$ is connected. In this sense, 
$\Delta_c$ {\it generates} $\tour(n,{\bf s})$. 

A fundamental property of $\ig(n,{\bf s})$ is that it is regular.  
In other words, any two tournaments 
with the same score sequence have the same number of 
cyclic triangles.
There are many proofs of this fact; see 
Moon \cite{Moo68}. 
The simplest way is to 
observe that, if ${\bf w}=(w_1,\ldots,w_n)$ is the 
win sequence, then there are precisely $\sum_i{w_i\choose2}$
non-cyclic triangles, since each such triangle
has a unique player that wins against the other two. 

Recalling that
${\bf s}={\bf w}-\frac{n-1}{2}{\bf 1}_n$, 
we observe, apparently for the first time, that 
the degree $d(n,{\bf s})$ of $\ig(n,{\bf s})$
can be described in terms of distances in  
$\Pi_{n-1}$. Specifically, 
\begin{equation}\label{E_triangles}
d(n,{\bf s})=\frac{\| {\bf s}_n\|^2-\| {\bf s}\|^2}{2},
\end{equation}
where 
$\|{\bf x}\|^2
=\sum_i x_i^2$. 

Roughly speaking, $\|{\bf s}_n\|$ is the radius of 
$\Pi_{n-1}'=\Pi_{n-1}-\frac{n-1}{2}{\bf 1}_n$. 
All of its vertices (permutations of ${\bf s}_n$)
are at the same distance from its center. 
The degree $d(n,{\bf s})$ of $\ig(n,{\bf s})$ increases  
as ${\bf s}$ moves 
closer to the centre ${\bf 0}_n$ of $\Pi_{n-1}'$. Since $\Delta_c$ is neutral, 
it is natural
to expect tournaments which are closer to being neutral
to contain more copies of $\Delta_c$. In the other extreme, 
vertices of $\Pi_{n-1}'$ correspond to acyclic tournaments
with no copies of $\Delta_c$.

\section{Purpose}

{\it Coxeter tournaments} were introduced in  \cite{KS23}. 
Such tournaments
can have collaborative
and solitaire games, in addition to the usual
competitive games in classical tournaments. 
The motivation 
for studying these tournaments 
is their connection with the 
{\it Coxeter permutahedra} $\Pi_\Phi$ 
in
Ardila, Castillo, Eur and Postnikov \cite{ACEP20}.

\begin{figure}[h!]
\centering
\includegraphics[scale=0.85]{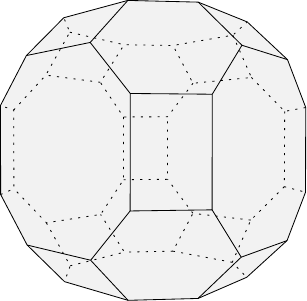}
\caption{A Coxeter permutahedron $\Pi_\Phi$   
of type $\Phi=C_3$.
}
\label{F_permC}
\end{figure}

The permutahedron $\Pi_{n-1}$
corresponds to the 
well-studied
root system of type $\Phi=A_{n-1}$. 
As in \cite{KS23}, we focus on the types $\Phi=B_n$, $C_n$ and $D_n$. 
Hyperplane descriptions of $\Pi_\Phi$ are obtained in \cite{ACEP20} 
in terms of submodular functions, generalizing the result of
Rado \cite{Rad52} mentioned above. 
Using these,
it is shown in \cite{KS23} that  
$\Pi_\Phi$ is the set of mean score sequences
of random Coxeter tournaments, generalizing a 
classical result of Moon \cite{Moo63}.

In this work, we study the combinatorics of 
deterministic Coxeter tournaments. 
We introduce 
Coxeter interchange graphs $\ig(\Phi,{\bf s})$
and prove an analogue (see \cref{T_MainCount}) of \eqref{E_triangles}
for their degrees $d(\Phi,{\bf s})$.
Along the way, we will also obtain an analogue (see \cref{T_MainLan}) of 
Landau's classical theorem \cite{Lan53},
characterizing the set 
$\score(\Phi)$
of points in 
$\Pi_\Phi$ that are score sequences of
Coxeter tournaments. 

The main difficulty
in extending \eqref{E_triangles} is that the graphs
$\ig(\Phi,{\bf s})$ involve several types of {\it generators;}
see \cref{F_genB,F_genC,F_genD}.
In type $A_{n-1}$ there is only one generator
$\Delta_c$, which makes the situation 
much simpler. 
Although all of these additional generators 
can be seen as neutral cycles, in the sense
of \cite{Zas82}, the Coxeter setting is significantly
more complicated due to interactions between
the various types of generators. 

The current work also forms the basis
for \cite{BKMP23},
which studies random walks
on Coxeter interchange graphs. 
Other structural properties of 
$\ig(\Phi,{\bf s})$ are proved in 
\cite{BKMP23}, such as their connectivity
and diameter. 

\subsection{Outline}
Coxeter tournaments are 
defined in \cref{S_Cox_tour}.
See \cref{S_results} for our results
and \cref{S_construct,S_count} for the proofs.

\section{Coxeter tournaments}
\label{S_Cox_tour}

Root systems are discussed in 
\cite[Section 2]{KS23} and in many standard texts. 
We focus on the 
infinite families of 
types $\Phi=B_n$, $C_n$ and $D_n$,
given by 
\begin{align*}
A_{n-1} &= \{ {\bf e}_i - {\bf e}_j : i \not = j \in [n]\},\\
B_n &= \{\pm {\bf e}_i \pm {\bf e}_j : i \not = j \in [n]\} \cup \{\pm {\bf e}_i : i \in [n]\},\\
C_n &= \{\pm {\bf e}_i \pm {\bf e}_j : i \not = j \in [n]\} \cup \{\pm 2{\bf e}_i : i \in [n]\},\\
D_n &= \{\pm {\bf e}_i \pm {\bf e}_j : i \not = j \in [n]\}.
\end{align*}
We select the following  
{\it positive systems} $\Phi^+$ for these
$\Phi$:  
\begin{align*}
A_{n-1}^+&=\{{\bf e}_i-{\bf e}_j:i>j\in[n]\},\\
B_{n}^+&=\{{\bf e}_i \pm {\bf e}_j : i > j \in [n]\} \cup \{{\bf e}_i : i \in [n]\}, \\
C_{n}^+&=\{{\bf e}_i \pm {\bf e}_j : i > j \in [n]\} \cup \{2{\bf e}_i : i \in [n]\}, \\
D_{n}^+&=\{{\bf e}_i \pm {\bf e}_j : i > j \in [n]\}.
\end{align*}

\begin{defn}
\label{D_SignedGraph}
To each $S\subseteq \Phi^+$, we associate a 
{\it signed $\Phi$-graph} $\cS$ with vertex set $[n]$
and edge set $E(\cS)$ which includes a
\begin{itemize}
\item {\it negative edge} $e_{ij}^-$ for each ${\bf e}_{ij}^-={\bf e}_i- {\bf e}_j\in S$, 
\item {\it positive edge} $e_{ij}^+$  for each ${\bf e}_{ij}^+={\bf e}_i+ {\bf e}_j\in S$, 
\item {\it half edge} $e_i^h$ for each ${\bf e}_i^h={\bf e}_i\in S$, 
\item {\it loop} $e_i^\ell$ for each ${\bf e}_i^\ell=2{\bf e}_i\in S$. 
\end{itemize}
Since edges in $\cS$ and vectors in $S$ are in bijective correspondence, 
we will denote the vector corresponding to an edge
$e$ by ${\bf e}$ (often also with sub and superscripts, as above, when 
appropriate). 
\end{defn}

What we call negative/positive edges above are 
positive/negative edges in Zaslavsky \cite{Zas91}.
The above convention (adopted in \cite{KS23}) 
is more natural in our current context, 
since negative/positive edges 
will correspond to competitive/collaborative games
in a Coxeter tournament.

Signed graphs of 
all types 
$B_n$, $C_n$ and $D_n$ 
have negative and positive edges. However, 
only $B_n$-graphs have half edges, and only 
$C_n$-graphs have loops. 

\begin{defn}
We let $\cK_\Phi$ denote the 
{\it complete $\Phi$-graph}, associated with the entire positive
system $S=\Phi^+$. 
\end{defn}

Note that classical graphs $G$ correspond to 
$A_{n-1}$-graphs $\cS$ with only 
negative edges. 
Therefore, the complete graph $K_n$ corresponds to $\cK_{A_{n-1}}$.

Recall that a tournament is an orientation of $K_n$. 
Similarly, a Coxeter tournament of type $\Phi$ is an orientation of 
$\cK_\Phi$. 

\begin{defn}
\label{D_CoxTour}
A {\it Coxeter tournament} $\cT$ on $\cK_\Phi$ is an orientation of 
the edges $e\in \Phi^+$. In this context, we refer to 
the edges $e$ as {\it games}. 
Formally, $\cT=\{w_e:e\in \Phi^+\}$, for some 
choice of $w_e\in\{0,1\}$. 
The {\it score sequence} 
of $\cT$ is given by   
\begin{equation}\label{E_bfs}
{\bf s}(\cT)=\sum_{e\in \Phi^+} (w_e-1/2) {\bf e}. 
\end{equation}
\end{defn}

In other words, 
the values $w_e$ determine the orientation of the edges $e$. 
When $w_e=1$ the vector ${\bf e}$ contributes $+{\bf e}/2$ to 
${\bf s}(\cT)$, and when 
$w_e=0$ the contribution is $-{\bf e}/2$. 
More explicitly, 
points are awarded as follows. 
\begin{itemize}
\label{D_CoxeterGames}
\item 
Negative edges 
$e_{ij}^-$ are {\it competitive games.}
One player wins and the other loses $1/2$ point, 
contributing $(w_{ij}^--1/2){\bf e}_{ij}^-$ to ${\bf s}$. 
\item 
Positive edges 
$e_{ij}^+$ are {\it collaborative games.}
Both players win or lose $1/2$ point, 
contributing $(w_{ij}^+-1/2){\bf e}_{ij}^+$ to ${\bf s}$. 
\item 
Half edges 
$e_i^h$ are {\it (half edge) solitaire games.}
One player wins or loses $1/2$ point, 
contributing $(w_i^h-1/2){\bf e}_i^h$ to ${\bf s}$.
\item 
Loops
$e_i^\ell$ are {\it (loop) solitaire games.}
One player wins or loses $1$ point, 
contributing $(w_i^\ell-1/2){\bf e}_i^\ell$ to ${\bf s}$.
\end{itemize}

Let us stress that, in all types $B_n$, $C_n$ and $D_n$, 
both games $e_{ij}^\pm$ are present for each pair $i,j$, and 
each such game is associated with its own $w_{ij}^\pm$. 
In types $B_n$ and $C_n$, there is also a solitaire game for each $i$.  
The difference is that solitaire games are worth twice as many
($\pm1$ rather than $\pm1/2$) points 
in type 
$C_n$ than in $B_n$, since 
${\bf e}_i^\ell=2{\bf e}_i$ and 
${\bf e}_i^h={\bf e}_i$.

The {\it standard score sequence,} 
corresponding to the Coxeter tournament on $\cK_\Phi$ in which 
all $w_e=1$, 
is denoted by
\begin{equation}\label{E_weylvec}
{\bf s}_\Phi=\sum_{{\bf e}\in\Phi^+} {\bf e}/2. 
\end{equation}

It will sometimes be useful to extend the notion of a 
score sequence ${\bf s}(\cX)$ 
to orientations $\cX$ of  
incomplete signed graphs $\cS$. 
We do so in the natural way, by summing over $E(\cS)$
rather than all of $\Phi^+$ in \eqref{E_bfs}.

\section{Results}
\label{S_results}

\subsection{Score sequences}
\label{S_MainLan}

For ${\bf x}, {\bf y}\in\R^n$, we
write ${\bf x}\preceq_w {\bf y}$, 
if $\sum_{i=1}^k ({\bf x}^\downarrow)_i\le \sum_{i=1}^k ({\bf y}^\downarrow)_i$, 
for all $1\le k\le n$. 
This notion is called {\it weak sub-majorization} in \cite{MOA11}.
For ${\bf x}\in\R^n$, we let  
$|{\bf x}|=(|x_1|,\ldots,|x_n|)$. 

Recall that the permutahedron $\Pi_{n-1}$
is the convex hull of all permutations
of the standard win sequence ${\bf w}_n$. 
In the Coxeter setting it is more convenient to work with score sequences. 
The Coxeter permutahedron $\Pi_\Phi$ is the convex hull 
of the orbit of 
the standard score sequence
${\bf s}_\Phi$, under the action of the Weyl group 
associated with the root system of type $\Phi$. 
In \cite{ACEP20}, Rado's theorem \cite{Rad52}
is generalized, showing 
$\Pi_\Phi=\{{\bf x}\in\R^n: |{\bf x}|\preceq_w {\bf s}_\Phi\}$.
In \cite{KS23}, it is shown that $\Pi_\Phi$ is the set of 
{\it mean} score sequences of {\it random} Coxeter tournaments. 

Our first result characterizes 
the set $\score(\Phi)$ of all score
sequences of (deterministic) Coxeter tournaments.

\begin{thm}
\label{T_MainLan}
Let $\Phi$ be a 
root system of type
$B_n$, $C_n$ or $D_n$. 
Then ${\bf s}\in\R^n$ is a score sequence  
${\bf s}={\bf s}(\cT)$ 
of some Coxeter tournament $\cT$
on the complete $\Phi$-graph $\cK_\Phi$
if and only if  $|{\bf s}| \preceq_w {\bf s}_\Phi$
and 
\begin{itemize}[nosep]
\item in $B_n$: ${\bf s}\in(\Z+1/2)^n$, 
\item in $C_n$: ${\bf s} \in \Z^n$ and $\sum_{i=1}^n s_i \equiv 
{n \choose 2} + n \Mod{2}$,
\item in $D_n$: ${\bf s} \in \Z^n$ and $\sum_{i=1}^n s_i \equiv 
{n \choose 2} \Mod{2}$. 
\end{itemize}
\end{thm}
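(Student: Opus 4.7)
The plan is to handle the two implications separately. For necessity I would combine the Coxeter analogue of Moon's theorem proved in \cite{KS23} with a direct local parity count at each vertex; for sufficiency I would proceed by a Havel--Hakimi style induction on $n$, after reducing to a dominant score sequence via the Weyl group symmetry of $\cK_\Phi$.

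For necessity, the weak sub-majorization $|{\bf s}|\preceq_w{\bf s}_\Phi$ is immediate: \cite{KS23} identifies $\Pi_\Phi$ with the set of mean score sequences of random Coxeter tournaments on $\cK_\Phi$, and a deterministic tournament is the (Dirac) random tournament concentrated on a single outcome, so every score sequence lies in $\Pi_\Phi$. The integrality and parity conditions follow from a direct local count. Each game contributes $(w_e-1/2){\bf e}$ to ${\bf s}(\cT)$, so each coordinate $s_i$ receives $\pm 1/2$ from every incident signed edge and half edge, and $\pm 1$ from any loop. In type $B_n$ vertex $i$ is incident to $(n-1)+(n-1)+1=2n-1$ half-integer contributions, so $s_i\in\Z+1/2$. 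In type $C_n$ vertex $i$ receives $2(n-1)$ half-integer contributions together with $\pm 1$ from the loop, so $s_i\in\Z$; summing over $i$, only positive edges and loops contribute nontrivially to $\sum_i s_i$, each with weight $\pm 1$, giving $\sum_i s_i\equiv \binom{n}{2}+n\Mod{2}$. The $D_n$ case is analogous, without the loop contribution.

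For sufficiency, I would first exploit the action of $W_\Phi$ on $\cK_\Phi$: applying $w\in W_\Phi$ to a Coxeter tournament $\cT$ yields another Coxeter tournament whose score sequence is $w\cdot{\bf s}(\cT)$. Since both the hypotheses and the conclusion are $W_\Phi$-invariant, I may assume ${\bf s}$ lies in the dominant chamber, so ${\bf s}={\bf s}^\downarrow\ge{\bf 0}_n$ in types $B_n,C_n$, and in type $D_n$ the smallest coordinate $s_n$ is possibly negative but satisfies $|s_n|\le s_{n-1}$. The base case $n=1$ is immediate in each type. For the inductive step I would isolate vertex $n$ (of largest score) and orient its incident games in two stages: first use the solitaire game(s) at $n$ to absorb any parity mismatch, then choose the wins among the $2(n-1)$ signed edges at $n$ so that vertex $n$ contributes exactly $s_n$. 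Writing $a$ for the number of negative edges won by $n$ and $p$ for the number of positive edges at $n$ that are won, vertex $n$'s contribution is $a+p-(n-1)$ plus the solitaire adjustment, while the score at each remaining vertex is shifted by $\pm 1/2$, determined by \emph{which} edges are chosen.

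The main obstacle will be verifying that the reduced vector $\tilde{\bf s}\in\R^{n-1}$ on $\{1,\ldots,n-1\}$ still satisfies $|\tilde{\bf s}|\preceq_w{\bf s}_{\Phi'}$ and the inherited parity condition, where $\Phi'$ denotes the sub-root system obtained by forgetting the $n$-th coordinate. The parity is handled by the solitaire flexibility and the total sum bound by the choice of $a+p$. The delicate point is preserving the partial-sum inequalities of weak sub-majorization through a careful selection of \emph{which} edges at $n$ are oriented which way. I would attempt this greedily, letting player $n$ win against the $a$ competitors of largest current score and selecting the $p$ collaborative partners of largest score, in analogy with the classical Havel--Hakimi reduction recalled in \cref{S_tour}. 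I expect some case analysis to be required, depending on whether the majorization bound is tight at an intermediate partial sum and on the distinction between the $B_n$, $C_n$ and $D_n$ parity constraints.
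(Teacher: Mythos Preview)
Your necessity argument is correct and close in spirit to the paper's, though you outsource $|{\bf s}|\preceq_w{\bf s}_\Phi$ to \cite{KS23} whereas the paper gives a short direct bound on the partial sums of $|s_i|$.

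Your sufficiency argument takes a genuinely different route. The paper does \emph{not} induct on $n$. For $C_n$ and $D_n$ it instead (i) reduces to ${\bf s}\ge{\bf 0}_n$ by a game-reversal argument, (ii) lifts ${\bf s}$ to some ${\bf z}'$ with ${\bf s}\le{\bf z}'\preceq{\bf s}_\Phi$ (full, not weak, majorization) and $z_i'\equiv s_i\Mod 2$ coordinatewise, (iii) realizes ${\bf z}'$ by taking a classical Landau tournament and declaring all collaborative games won, and (iv) descends from ${\bf z}'$ to ${\bf s}$ by reversing, for each $i$, exactly $(z_i'-s_i)/2$ pairs of games in which $i$ had won both its competitive and collaborative game against some $j$. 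For $B_n$ the paper first fixes the half-edge outcomes and reduces to $D_n$. This ``lift to full majorization, then descend'' strategy completely avoids the partial-sum verification that your Havel--Hakimi step requires; it is modular and explicitly constructive. Your inductive approach is conceptually uniform and plausible, but the step you yourself flag as delicate---choosing the orientations at the top vertex so that $|\tilde{\bf s}|\preceq_w{\bf s}_{\Phi'}$---is genuinely nontrivial here: with two edges per pair and a parity constraint on how many of the $c_i$ can be $\pm1$ versus $0$, the greedy rule does not transparently preserve the absolute-value partial-sum bounds, and this would need a real argument. (Minor slip: with ${\bf s}={\bf s}^\downarrow$ the vertex of largest score is vertex $1$, not vertex $n$.)
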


\noindent {\it Remarks:}

\begin{enumerate}

\item 
Our proof is constructive, 
in that it shows how to {\it find} a Coxeter tournament 
$\cT$ with given score sequence ${\bf s}\in\score(\Phi)$.

\item 
The quantities ${n\choose2}+n$
and ${n\choose2}$  
are equal to $\sum_i({\bf s}_\Phi)_i$ in $\Phi=C_n$ and $D_n$. There is
no mod condition in $B_n$ due to the $1/2$ point solitaire games
in this type. 

\item
In \cite{Zas91} (see after Corollary 4.2 therein) 
Zaslavsky alludes to a 
characterization  of the score sequences on 
general signed graphs $\cS$, however, 
to the best of our knowledge, 
no such details have appeared. 
\cref{T_MainLan}
gives such a characterization in the 
case that $\cS=\cK_\Phi$ is complete. 

\end{enumerate}

\subsection{Coxeter interchange graphs}
\label{S_MainCount}

Our main result gives information about the set
$\tour(\Phi,{\bf s})$ of Coxeter tournaments
with a given score sequence ${\bf s}$. 

Recall that, in the classical setting, the set $\tour(n,{\bf s})$ is generated
by the cyclic triangle $\Delta_c$. 
We identify the generators in the Coxeter settings
of types $\Phi=B_n$, $C_n$ and $D_n$, and prove an analogue of 
\eqref{E_triangles} above. 
The generators are the smallest, non-trivial neutral 
Coxeter sub-tournaments. Each of them have three games. 

In $D_n$, in addition to $\Delta_c$, we require 
the {\it balanced triangle} $\Delta_b$, 
involving 
a collaborative win, collaborative loss, and competitive game, 
directed towards the collaborative win, 
as in \cref{F_genD}.
When depicting Coxeter tournaments, we  
draw competitive games as directed edges, directed away from the winner,
and we draw collaborative wins/losses as solid/dotted lines. 

\begin{figure}[h!]
\centering
\includegraphics[scale=1.15]{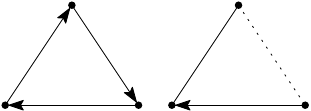}
\caption{Cyclic and balanced triangles
$\Delta_c$ and $\Delta_b$.}
\label{F_genD}
\end{figure}

In $B_n$ and $C_n$, we require 
$\Delta_c$, $\Delta_b$, and more. 
In $B_n,$ there are three {\it neutral pairs}
$\Omega_1$, $\Omega_2$ and $\Omega_3$, as in \cref{F_genB}. 
In  $C_n$, there are two {\it neutral clovers}
$\Theta_1$ and $\Theta_2$, as in \cref{F_genC}. 
We draw solitaire games as half edges (with only one endpoint), 
directed away/toward  
their endpoint if won/lost. In type $C_n$, we draw 
solitaire wins/loses as solid/dotted loops. 
We call the various $\Delta_c$, $\Delta_b$, $\Omega_i$, $\Theta_j$ {\it generators}.
\begin{figure}[h!]
\centering
\includegraphics[scale=1.15]{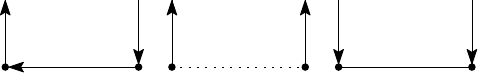}
\caption{Neutral pairs
$\Omega_1$, $\Omega_2$ and $\Omega_3$. 
}
\label{F_genB}
\end{figure}

\begin{figure}[h!]
\centering
\includegraphics[scale=1.15]{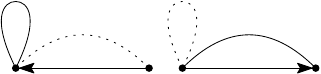}
\caption{Neutral clovers 
$\Theta_1$ and $\Theta_2$.
}
\label{F_genC}
\end{figure}

The {\it reversal} $\cX^*$ 
of an orientation $\cX=\{w_e:e\in E(\cS)\}$ (with all $w_e\in\{0,1\}$) 
of a (possibly incomplete) signed graph $\cS$ 
is obtained by 
reversing the orientation of 
all games in $\cX$. 
That is, 
$\cX^*=\{w_e^*:e\in E(\cS)\}$, where $w_e^*=1-w_e$. 
More generally, we let $\cX*\cY$ denote the reorientation 
obtained from $\cX$ by reversing the orientation of the games in 
some $\cY\subseteq \cX$. 
Note that 
$\cX^*=\cX*\cX$.

We say that $\cX$ is {\it neutral} if ${\bf s}(\cX)={\bf 0}_n$. 
Note that the cyclic and balanced triangles $\Delta_c$ and $\Delta_b$
and the neutral pairs $\Omega_i$ and clovers $\Theta_i$ are all neutral. 
As such, reversing a copy of a generator  in a Coxeter tournament does not 
change its score sequence.

We generalize the interchange graphs of 
Brualdi and Li \cite{BL84}. 

\begin{defn}
\label{D_CoxIG}
The {\it Coxeter $\Phi$-interchange graph} $\ig(\Phi,{\bf s})$
has a vertex $v(\cT)$ for each 
$\cT\in \tour(\Phi,{\bf s})$. Vertices $v(\cT_1),v(\cT_2)$
are neighbors if $\cT_2=\cT_1*\cG$ for some 
type $\Phi$ generator $\cG$ in $\cT_1$. When $\cG$ is a clover, 
we add a double edge between neighbors, 
and a single edge otherwise. 
\end{defn}

See \cref{F_IGs} above
for examples.

As we will see, double edges, associated with clover reversals,
are added to maintain
regularity. This is perhaps not unexpected, 
since clovers are the only generators 
that include loop games (worth 
$\pm1$ point rather than $\pm1/2$). 

\begin{thm}
\label{T_MainCount}
Let $\Phi=B_n$, $C_n$ or $D_n$ and ${\bf s}\in\score(\Phi)$. 
Then the Coxeter interchange graph $\ig(\Phi,{\bf s})$ is regular, 
with degree
\[
d(\Phi,{\bf s})
=\frac{\|{\bf s}_\Phi\|^2-\| {\bf s} \|^2}{2},
\]
where ${\bf s}_\Phi$ is the standard score sequence.
\end{thm}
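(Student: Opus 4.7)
The plan is to derive a ``pair-sum'' formula for $(\|{\bf s}_\Phi\|^2 - \|{\bf s}\|^2)/2$ that is localized at small vertex subsets, and then match it, subset by subset, with the count of generators.

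Since ${\bf s}(\cT) = \sum_{e \in \Phi^+}(w_e - \tfrac{1}{2}){\bf e}$, expanding the inner product gives
\[
\|{\bf s}\|^2 = \sum_{e, f \in \Phi^+} \langle {\bf e}, {\bf f}\rangle \left(w_e - \tfrac{1}{2}\right)\left(w_f - \tfrac{1}{2}\right),
\]
and setting all $w_e = 1$ yields the analogous expression for $\|{\bf s}_\Phi\|^2$ (each factor becoming $\tfrac{1}{2}$). The diagonal terms cancel since $(w_e - \tfrac{1}{2})^2 = \tfrac{1}{4}$, and for binary $w_e, w_f$ the identity $\tfrac{1}{4} - (w_e - \tfrac{1}{2})(w_f - \tfrac{1}{2}) = \tfrac{1}{2}\,\1[w_e \ne w_f]$ yields
\[
\frac{\|{\bf s}_\Phi\|^2 - \|{\bf s}\|^2}{2} = \frac{1}{2}\sum_{\{e, f\} \subset \Phi^+} \langle {\bf e}, {\bf f}\rangle\,\1[w_e \ne w_f].
\]

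Inspection of the root vectors shows $\langle {\bf e}, {\bf f}\rangle = 0$ whenever $e$ and $f$ share no vertex of $[n]$, or share both endpoints (as with the orthogonal pair $e_{ij}^+, e_{ij}^-$). So every contributing pair has vertex support of size $2$ or $3$: either (i) two competitive or collaborative edges meeting at a single vertex, supported on some triangle $\{i, j, k\}$, or (ii) a half edge (in $B_n$) or a loop (in $C_n$) paired with a competitive or collaborative edge through its vertex, supported on some pair $\{i, j\}$. This partitions the pair sum into a contribution $T_{ijk}$ from each triangle and a contribution $P_{ij}$ from each pair.

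The remaining step is the local identification: for each triangle, verify that $T_{ijk}$ equals $\#\Delta_c + \#\Delta_b$ on $\{i,j,k\}$; and for each pair, verify that $P_{ij}$ equals the corresponding count of $\Omega$'s (in $B_n$) or $2 \cdot \#\Theta$ (in $C_n$). The factor of $2$ on the clovers emerges naturally from $\|2{\bf e}_i\|^2 = 4$, which doubles the inner products involving loop vectors and matches the double-edge weighting in the definition of $\ig(\Phi, {\bf s})$. Summing the verified local identities over all supports recovers the claimed formula for $d(\Phi, {\bf s})$.

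The main obstacle is the local case analysis. Each triangle carries up to $6$ oriented edges and each $2$-vertex support up to $4$, so in principle $2^6$ and $2^4$ configurations need to be checked. The task is tractable because Weyl-group symmetries cut these down to a handful of representative cases, and the generator structure makes the cancellations transparent: pairs with negative inner product cancel exactly on configurations that fail to form a generator, while positive contributions accumulate on those that do. The classical $A$-type triangle-count formula \eqref{E_triangles} falls out of this framework as the special case involving only competitive edges and the generator $\Delta_c$.
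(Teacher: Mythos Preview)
Your proposal is correct and matches the paper's algebraic proof in \cref{S_count_alg} almost exactly: both expand $\|{\bf s}_\Phi\|^2-\|{\bf s}\|^2$ as a sum over pairs of positive roots, observe that only pairs sharing exactly one vertex contribute, localize to triangles $\{i,j,k\}$ (and, in types $B_n$ and $C_n$, to pairs $\{i,j\}$), and then verify the local identities by case analysis. Your indicator identity $\tfrac14-(w_e-\tfrac12)(w_f-\tfrac12)=\tfrac12\,\1[w_e\ne w_f]$ is equivalent to the paper's use of $\hat{\bf e}=(2w_e-1){\bf e}$ and the difference $\langle{\bf e},{\bf e}'\rangle-\langle\hat{\bf e},\hat{\bf e}'\rangle$.

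One minor point of comparison: for the local verification on triangles, the paper regroups the pair terms into quantities $\delta_{ijk}^{\sigma_{ij}\sigma_{ik}\sigma_{jk}}=\|{\bf e}_{ij}^{\sigma_{ij}}+{\bf e}_{ik}^{\sigma_{ik}}+{\bf e}_{jk}^{\sigma_{jk}}\|^2-\|\hat{\bf e}_{ij}^{\sigma_{ij}}+\hat{\bf e}_{ik}^{\sigma_{ik}}+\hat{\bf e}_{jk}^{\sigma_{jk}}\|^2$, which reduces the check to observing whether the three oriented edges sum to the zero vector (i.e., form a neutral triangle). This streamlines the case analysis somewhat compared with the direct symmetry-reduced enumeration you describe, but the content is the same. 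The paper also gives a second, combinatorial proof via embeddings of Coxeter tournaments into ordinary $A$-type tournaments of roughly double the size, which you do not pursue.
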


\noindent {\it Remarks:}

\begin{enumerate}

\item 
More explicitly, 
$d(\Phi,{\bf s})$ counts the number  $[\cG]$ of copies of type $\Phi$ 
generators $\cG$ in Coxeter tournaments $\cT\in\tour(\Phi,{\bf s})$ as follows: 
\[
d(\Phi,{\bf s})
=[\Delta_c]+[\Delta_b]+
\begin{cases}
[\Omega_1]+[\Omega_2]+[\Omega_3], & \text{if }\Phi=B_n,\\
2([\Theta_1]+[\Theta_2]), & \text{if }\Phi=C_n.\\
\end{cases}
\]
Recall that when $\Phi=A_{n-1}$, we simply have $d(A_{n-1},{\bf s})=[\Delta_c]$. 
The ``weights'' 2 in type $C_n$ correspond to the double edges
associated with clovers. 

\item 
In \cite{Zas91} (see Figures 1 and 2 therein) 
oriented edges in oriented signed graphs are 
represented using half edges. In this representation (called the {\it Z-frame} in \cite[\S4.1]{BKMP23}), each vertex in 
the generators in \cref{F_genB,F_genC,F_genD} (and indeed any other neutral 
structure) has the same number of half edges coming in as going out. 
From this point of view, all generators in 
\cref{F_genB,F_genC,F_genD} are neutral cycles; indeed, they are 
the smallest such cycles. 
That being said,  
generalizing \eqref{E_triangles} is far from straightforward 
due to delicate interactions between the various types of generators. 
On the other hand, in type $A_{n-1}$
there is only one generator (the cyclic triangle $\Delta_c$), 
and so to prove  \eqref{E_triangles} one only needs
to count 
non-cyclic triangles, which is very simple.

\item 
We give two proofs of \cref{T_MainCount}. Our first proof 
in \cref{S_count_alg} 
is algebraic. 
Our second proof is combinatorial, 
and perhaps more intuitive. 
Specifically, in \cref{S_embed} we 
show how to embed 
a Coxeter tournament ${\cT}$ as a certain graph tournament $T=T(\cT)$, in such a way
that its score sequence and the various copies of 
generators in $\cT$ are recoverable. 
In this way, \cref{T_MainCount} 
can (rather elaborately) be 
derived from \eqref{E_triangles}.

\item Finally, we note that a version
of \cref{T_MainCount} (with appropriately chosen ``weights'' in 
$d(\Phi,{\bf s})$) holds
for the exceptional types $E_6$, $E_7$, $E_8$, $F_4$ and $G_2$. 
In these types, just as in types $A_{n-1}$, $B_n$, $C_n$, $D_n$, 
the generators consist of triples of (oriented) roots which sum up to $0$. 
Versions for the exceptional types follow by a straightforward 
adaptation of our proof in \cref{S_count_alg}. 
We leave the details to the interested reader. 

\end{enumerate}

\section{Characterizing score sequences}
\label{S_construct}

In this section, we prove \cref{T_MainLan}. 
In \cref{S_Nec}, we show that the conditions are necessary. 
Then, in \cref{S_CharCD}, we show that these conditions are sufficient
when $\Phi=C_n$ or $D_n$. The proof is constructive. 
First, we show that it suffices to 
construct a Coxeter tournament with score sequence
$|{\bf s}|$, with non-negative entries. Once such a 
Coxeter tournament has been constructed,
there is a way of modifying it, so as to obtain one with score sequence ${\bf s}$. 
We show that constructing a Coxeter tournament with a 
non-negative score sequence eventually reduces to constructing a 
related, classical graph tournament. 
Finally, in \cref{S_CharB}, when $\Phi=B_n$, we establish 
a reduction to the 
type $D_n$ construction. 

We note that the standard score sequence 
in type $\Phi$ satisfies 
\begin{equation}\label{E_sw}
{\bf s}_\Phi={\bf w}_n+\delta_\Phi {\bf 1}_n,
\end{equation}
where ${\bf w}_n=(0,1,\ldots,n-1)$ is the standard win sequence
of type $A_{n-1}$ and the additional scalar factor 
\[
\delta_\Phi
=
\begin{cases}
1/2&\Phi=B_n,\\
1&\Phi=C_n,\\
0&\Phi=D_n,
\end{cases}
\]
accounts for solitaire games.

\subsection{Necessary conditions}
\label{S_Nec}
First, we show that the conditions in
\cref{T_MainLan} are necessary. 

Technically, the quantities 
$w_{ij}^\pm$ were only defined above for $i>j$. 
We extend these for $i < j$ by setting
$w_{ij}^- = 1 - w_{ji}^-$ and $w_{ij}^+ = w_{ji}^+$.

\begin{proof}[Proof of \cref{T_MainLan} (only if part)]
Suppose that  ${\bf s}\in\score(\Phi)$. Without loss of generality, 
assume $|s_1| \ge |s_2| \ge \cdots \ge |s_n|$. 
Let
\[
s_{ij}^\pm 
= w_{ij}^\pm -1/2
\]
be the points that $i$ earns  
from its competitive/collaborative game with $j$. 
Put $s_{ij} = s_{ij}^+ + s_{ij}^-$. 
Then the total number of points earned by $i$ is 
\[
s_i 
= \sum_{j \neq i} s_{ij} + \eps_i \delta_\Phi,
\]
for some $\eps_i=\pm1$. 
Note that for every pair $\{i,j\}$, 
exactly one of $s_{ij},s_{ji}=0$ and the other is $\pm 1$. 
Hence ${\bf s} \in (\Z + 1/2)^n$ if $\Phi = B_n$ 
and ${\bf s} \in \Z^n$ if $\Phi = C_n$ or $D_n$.
Further, if $\Phi = C_n$ or $D_n$, then
\[ 
\sum_{i=1}^n s_i 
\equiv
 \sum_{i<j} (s_{ij} + s_{ji}) + n\delta_\Phi
\equiv {n \choose 2} + n\delta_\Phi \Mod{2}.
\]

Finally, observe that
\[
|s_i| 
\le \sum_{j \neq i} |s_{ij}| + \delta_\Phi.
\]
It follows that
\[
\sum_{i=1}^k |s_i| 
\le 
\sum_{\substack{\ \ i,j\colon \\ i<j\le k}} (|s_{ij}| + |s_{ji}|) + 
\sum_{\substack{\ \ i,j\colon \\ i \le k < j}} |s_{ij}| + k\delta_\Phi.
\]
Since each $|s_{ij}| + |s_{ji}|=1$ and $|s_{ij}|\le 1$, it follows that 
\[
\sum_{i=1}^k |s_i| 
\le {k\choose2}+k(n-k)+k\delta_\Phi,
\]
and so, by \eqref{E_sw}, it follows that $|{\bf s}|\preceq_w {\bf s}_\Phi$,
as required. 
\end{proof}

\subsection{Sufficiency in $C_n$ and $D_n$}
\label{S_CharCD}

We begin with 
types $C_n$ and $D_n$. The conditions in these cases
are similar, and can be dealt with simultaneously. 
A number of preliminary results (\cref{L_construction_leq,L_construction_strongleq,L_construction_if_positive_done,L_construction_evenjumps,L_construction_Dfinal,L_construction_Cfinal}) are required. 

\begin{lemma}\label{L_construction_leq}
If ${\bf x}, {\bf y} \in \Z^n$ are such that ${\bf x} \preceq_w {\bf y}$, 
then there exists ${\bf z} \in \Z^n$ so that 
\[
{\bf x} \le {\bf z} \preceq {\bf y},
\] 
where ${\bf x} \le {\bf z}$ means that $x_i \le z_i$ for all $i\in[n]$.
\end{lemma}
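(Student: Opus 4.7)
The plan is to sort ${\bf x}$ and ${\bf y}$ into weakly decreasing order (applying a common permutation, which is harmless since both $\preceq_w$ and $\preceq$ are permutation-invariant, and inverting at the end to recover ${\bf z}$ in the original coordinates) and then induct on the integer deficit $c := \sum_{i=1}^n(y_i - x_i) \ge 0$. The base case $c = 0$ is immediate: weak sub-majorization with equal total sums is just majorization, so ${\bf z} := {\bf x}$ works.

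For the inductive step $c \ge 1$, the strategy is to produce a new integer vector ${\bf x}'$ with ${\bf x} \le {\bf x}'$, still weakly decreasing, still satisfying ${\bf x}' \preceq_w {\bf y}$, and with deficit reduced to $c-1$; applying the inductive hypothesis to $({\bf x}', {\bf y})$ then yields an integer ${\bf z} \ge {\bf x}' \ge {\bf x}$ with ${\bf z} \preceq {\bf y}$. Concretely, writing $d_k := \sum_{i=1}^k(y_i - x_i) \ge 0$ and
\[
j^{**} := \max\{k \in \{0,1,\ldots,n\} : d_k = 0\},
\]
I will define ${\bf x}'$ by incrementing the $j$th coordinate of ${\bf x}$ by $1$, where $j := j^{**} + 1 \in [n]$.

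The weak sub-majorization check for ${\bf x}'$ is then routine: the new partial sums satisfy $d'_k = d_k \ge 0$ for $k < j$ and $d'_k = d_k - 1 \ge 0$ for $k \ge j$, the latter using $d_k \ge 1$ beyond $j^{**}$ by maximality. The main obstacle is verifying that ${\bf x}'$ remains weakly decreasing, which (when $j \ge 2$) requires the strict inequality $x_{j-1} > x_j$, i.e., $x_{j^{**}} > x_{j^{**}+1}$; without this, the sorted rearrangement of ${\bf x}'$ would have different partial sums than the ``in-place'' sums $d'_k$. The key idea is to read off bounds on adjacent coordinates of ${\bf y}$ from the defining conditions at $j^{**}$: from $d_{j^{**}-1} \ge 0 = d_{j^{**}}$ one gets $y_{j^{**}} \le x_{j^{**}}$, and from $d_{j^{**}} = 0$ with $d_{j^{**}+1} \ge 1$ one gets $y_{j^{**}+1} \ge x_{j^{**}+1} + 1$. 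Chaining these through the decreasing order of ${\bf y}$ yields
\[
x_{j^{**}} \;\ge\; y_{j^{**}} \;\ge\; y_{j^{**}+1} \;\ge\; x_{j^{**}+1} + 1,
\]
which delivers the required strict inequality. Integrality is preserved since the only modification is incrementing a single coordinate by $1$, completing the inductive step and the proof.
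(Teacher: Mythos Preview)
Your proof is correct and follows essentially the same inductive scheme as the paper: reduce the integer deficit $c=\sum_i(y_i-x_i)$ one unit at a time by incrementing a single coordinate, preserving weak sub-majorization throughout. The only real difference is \emph{which} coordinate is incremented: the paper increments the smallest entry of ${\bf x}$ and simply asserts that the result still satisfies ${\bf x}^{(1)}\preceq_w{\bf y}$, whereas you increment at position $j^{**}+1$ (the first index past the last partial-sum tie) and supply an explicit verification, including the pleasant monotonicity chain $x_{j^{**}}\ge y_{j^{**}}\ge y_{j^{**}+1}\ge x_{j^{**}+1}+1$. The two choices generally differ (e.g.\ for ${\bf x}=(1,0)$, ${\bf y}=(2,0)$ the paper produces $(1,1)$ while your step produces $(2,0)$), but both work and the overall structure is the same.

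One small point of phrasing: you cannot in general sort both ${\bf x}$ and ${\bf y}$ into decreasing order by a single ``common permutation''. What your argument actually requires (and uses, via $d_k\ge 0$ and $y_{j^{**}}\ge y_{j^{**}+1}$) is that ${\bf x}$ be sorted by some permutation $\sigma$ (with $\sigma^{-1}$ applied to ${\bf z}$ at the end so that ${\bf x}\le{\bf z}$ transfers back), and that ${\bf y}$ be \emph{separately} replaced by ${\bf y}^\downarrow$, which is harmless since $\preceq_w$ and $\preceq$ depend only on ${\bf y}^\downarrow$. With that adjustment the proof is complete.
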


See \cite[Corollary 1.8]{And89} for a proof in the case that 
${\bf x}, {\bf y}, {\bf z} \in \R^n$.

\begin{proof}
If $\sum_i x_i=\sum_i y_i$ then ${\bf x}\preceq {\bf y}$, 
and we take ${\bf z}={\bf x}$. 
Otherwise, let ${\bf x}^{(1)}$ be the sequence obtained from ${\bf x}$ 
by increasing its smallest coordinate by~$1$. 
It can be seen that ${\bf x}^{(1)}\preceq_w {\bf y}$. 
Therefore, repeating this procedure until 
$\sum_i x^{(k)}_i= \sum_i y_i$, we find the requisite  ${\bf z}={\bf x}^{(k)}$. 
\end{proof}

\begin{lemma}\label{L_construction_strongleq}
If ${\bf x}, {\bf y}, {\bf z} \in \Z^n$ are such that 
\[
{\bf x} \le {\bf z} \preceq {\bf y}  \quad \text{and} \quad \sum_{i=1}^n x_i \equiv \sum_{i=1}^n y_i \Mod{2},
\]
then there exists ${\bf z}' \in \Z^n$ so that
\[ 
{\bf x} \le {\bf z}' \preceq {\bf y} \quad \text{and} \quad z'_i \equiv x_i \Mod{2}
\]
for every $i \in [n]$.
\end{lemma}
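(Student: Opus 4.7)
The plan is to prove the lemma by induction on the size of the set of ``bad'' coordinates
\[
B=\{\,i\in[n] : z_i - x_i \text{ is odd}\,\},
\]
reducing $|B|$ in pairs via Robin-Hood transfers that preserve both the coordinate-wise bound ${\bf x}\le\cdot$ and the majorization $\cdot\preceq{\bf y}$.

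First I would note that $|B|$ is automatically even. The condition ${\bf z}\preceq{\bf y}$ forces $\sum_i z_i=\sum_i y_i$, and combined with the congruence hypothesis this gives $\sum_i (z_i-x_i)\equiv 0\Mod 2$, so indeed $|B|$ is even. If $|B|=0$, take ${\bf z}'={\bf z}$ and we are done. Otherwise I pick $i\in B$ with $z_i$ maximal (among indices of $B$) and $j\in B\setminus\{i\}$ with $z_j$ minimal, so that $z_i\ge z_j$, and define $\widetilde{\bf z}$ by $\widetilde z_i=z_i-1$, $\widetilde z_j=z_j+1$, and $\widetilde z_k=z_k$ otherwise.

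Three properties then need to be checked for $\widetilde{\bf z}$. Coordinate-wise, $\widetilde{\bf z}\ge{\bf x}$, because $z_i-x_i$ is a non-negative odd integer and hence at least $1$, giving $\widetilde z_i=z_i-1\ge x_i$; and trivially $\widetilde z_j=z_j+1\ge x_j$. Parity-wise, $\widetilde z_i\equiv z_i+1\equiv x_i\Mod 2$ and similarly $\widetilde z_j\equiv x_j\Mod 2$, so the bad set shrinks to $|\widetilde B|=|B|-2$. Majorization-wise, the local move $(z_i,z_j)\mapsto(z_i-1,z_j+1)$ with $z_i\ge z_j$ is, up to a possible swap of the two entries when $z_i-z_j\le 1$, a $T$-transform, hence $\widetilde{\bf z}\preceq{\bf z}\preceq{\bf y}$. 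Applying the inductive hypothesis to $\widetilde{\bf z}$ in place of ${\bf z}$ then produces the required ${\bf z}'$.

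The main obstacle I expect is the edge case $z_i=z_j$ in the majorization step, which occurs precisely when all indices of $B$ share a common $z$-value. In this case the unit transfer spreads ${\bf z}$ out locally, so one cannot simply invoke $T$-transform invariance; instead one has to exploit the slack provided by ${\bf z}\preceq{\bf y}$ at the levels surrounding that common value, either by routing the transfer through a non-bad coordinate $k$ with $z_k$ lying strictly between nearby $z$-values of $B$, or by checking the partial-sum inequalities $\sum_{r=1}^k(\widetilde{\bf z}^{\downarrow})_r\le \sum_{r=1}^k({\bf y}^{\downarrow})_r$ directly for the finitely many indices $k$ where equality in ${\bf z}\preceq{\bf y}$ might be tight.
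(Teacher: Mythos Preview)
Your approach is essentially the paper's: both repair parity by $\pm1$ transfers on pairs of bad indices. The paper sorts ${\bf z}$ decreasingly, lists the bad indices as $i_1<\cdots<i_k$, and in one shot sets $z'_{i_{2m-1}}=z_{i_{2m-1}}-1$, $z'_{i_{2m}}=z_{i_{2m}}+1$, then asserts ${\bf z}'\preceq{\bf y}$ because the perturbation ${\bf u}$ has nonpositive partial sums; you instead pair the extremal bad indices and iterate. However, the edge case you flag is not a technicality to be patched --- it is fatal. The lemma as stated is \emph{false}: for $n=2$, ${\bf x}=(1,1)$ and ${\bf y}={\bf z}=(2,2)$, all hypotheses hold, yet any ${\bf z}'\preceq(2,2)$ with odd entries $\ge 1$ would force $z'_1=z'_2=1$, contradicting $\sum_i z'_i=4$. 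Neither of your proposed fixes can work here, and the paper's partial-sum step fails for the same reason (it tacitly needs ${\bf z}+{\bf u}$ to remain sorted).

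What rescues the application is that the paper only invokes the lemma with ${\bf y}={\bf s}_\Phi$, whose entries are distinct integers. Under that extra hypothesis your inductive step goes through even in the tie case: if ${\bf z}$ is sorted decreasingly and $z_k=z_{k+1}$, the $k$-th majorization inequality must be strict (otherwise $z_k\ge y^\downarrow_k>y^\downarrow_{k+1}\ge z_{k+1}$), so by integrality there is at least one unit of slack at every level $k$ inside the tied block. The anti-Robin-Hood move $(v,v)\mapsto(v-1,v+1)$ raises exactly those sorted partial sums by $1$, whence $\widetilde{\bf z}\preceq{\bf y}$ and the induction proceeds. Adding the hypothesis that ${\bf y}$ has distinct entries repairs both your argument and the paper's.
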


\begin{proof}
Without loss of generality 
we may
assume that $z_1 \ge z_2 \ge \cdots \ge z_n$. 
Let 
\[
I  
= \{i \in [n]\colon x_i\not \equiv z_i \Mod{2}\} 
= \{i_1 < i_2 < \cdots < i_k\}.
\]
Since $\sum z_i = \sum y_i$ and 
$\sum x_i \equiv \sum y_i \Mod{2}$, 
it follows that $|I|=k$ is even. 
Put 
\[ 
z'_i 
= \begin{cases} 
z_i-1 &i\in\{i_1,i_3,\ldots,i_{k-1}\}, \\
z_i+1 &i\in\{i_2,i_4,\ldots,i_{k}\},\\
z_i &i\notin I.
\end{cases} 
\]
Note that $\sum z'_i = \sum z_i = \sum y_i$. 
In fact, ${\bf z}' = {\bf z} + {\bf u}$, 
for some ${\bf  u}\in\{0,\pm1\}^n$ with all partial sums 
$\sum_{i=1}^k u_i\le 0$, with equality when $k=n$. 
Hence, ${\bf z}'={\bf z} + {\bf u}\preceq {\bf y}+{\bf 0}_n={\bf y}$.  
Finally, observe that,   
in obtaining ${\bf z}'$ from ${\bf z}$, we have 
only modified the coordinates of ${\bf z}$ for which $x_i < z_i$. Therefore, 
since ${\bf x}\le {\bf z}$, 
it follows that 
${\bf x} \le {\bf z}'$.
\end{proof}

\begin{lemma}\label{L_construction_if_positive_done}
Suppose that $\Phi=C_n$ or $D_n$ and let ${\bf s} \in \Z^n$. 
If $|{\bf s}|\in\score(\Phi)$
then ${\bf s}\in \score(\Phi)$. 
\end{lemma}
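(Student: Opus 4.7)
The plan is to obtain a Coxeter tournament $\cT$ with score ${\bf s}$ by applying a suitable signed-permutation symmetry to a Coxeter tournament $\cT_0$ with ${\bf s}(\cT_0)=|{\bf s}|$, which is available by the hypothesis $|{\bf s}|\in\score(\Phi)$. Concretely, let $\epsilon_i=1$ if $s_i\ge 0$ and $\epsilon_i=-1$ otherwise, and let $\sigma\in S_n^\pm$ be the sign-flip ${\bf e}_i\mapsto\epsilon_i{\bf e}_i$, so that $\sigma(|{\bf s}|)={\bf s}$.

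The first observation is that $\sigma$ preserves $\Phi$ as a set: for $\Phi=C_n$ or $D_n$, the root system is invariant under \emph{all} coordinate sign changes (even though, in type $D_n$, $\sigma$ need not lie in $W_\Phi$ when an odd number of the $s_i$ are negative). Hence $\sigma$ permutes the unordered pairs $\{\pm{\bf e}\}$ with ${\bf e}\in\Phi^+$, and for every ${\bf f}\in\Phi^+$ there is a unique ${\bf e}\in\Phi^+$ with $\sigma({\bf e})\in\{{\bf f},-{\bf f}\}$.

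I would then define a new Coxeter tournament $\cT$ on $\cK_\Phi$ by
\[
w_{\bf f}^{(\cT)}=\begin{cases} w_{\sigma({\bf f})}^{(\cT_0)} & \text{if }\sigma({\bf f})\in\Phi^+,\\ 1-w_{-\sigma({\bf f})}^{(\cT_0)} & \text{if }\sigma({\bf f})\in -\Phi^+, \end{cases}
\]
for each ${\bf f}\in\Phi^+$. A short case-check (on whether $\sigma({\bf e})\in\Phi^+$ or $\sigma({\bf e})\in -\Phi^+$) shows that each contribution $(w_{\bf e}^{(\cT_0)}-1/2){\bf e}$ to ${\bf s}(\cT_0)$ produces a contribution $\sigma\bigl((w_{\bf e}^{(\cT_0)}-1/2){\bf e}\bigr)$ to ${\bf s}(\cT)$, since in the second case the sign flip from $1-w$ exactly compensates the sign flip from ${\bf f}=-\sigma({\bf e})$. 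Summing over $\Phi^+$ via the bijection above yields ${\bf s}(\cT)=\sigma({\bf s}(\cT_0))=\sigma(|{\bf s}|)={\bf s}$, so ${\bf s}\in\score(\Phi)$.

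The argument is essentially a sign-bookkeeping exercise and there is no serious obstacle. The only point worth flagging is that we use $\sigma$ as a set-theoretic symmetry of the root system rather than as an element of the Weyl group; this is why the construction goes through in type $D_n$ even when $|\{i\colon s_i<0\}|$ is odd, and it is also why the argument fails in type $A_{n-1}$ (where $\sigma$ would not preserve $\Phi$), necessitating the separate treatment of $\Phi=B_n$ in \cref{S_CharB}.
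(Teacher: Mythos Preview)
Your proof is correct and takes a genuinely different route from the paper's. You exploit the global set-theoretic symmetry of $\Phi$ under coordinate sign changes: the map $\sigma$ acts on the positive roots (up to sign), and pushing $\cT_0$ forward along $\sigma$ yields a tournament with score $\sigma(|{\bf s}|)={\bf s}$. In particular, when exactly one of $s_i,s_j$ is negative, your construction \emph{swaps} the competitive and collaborative games between $i$ and $j$. The paper, by contrast, never swaps edge types: for each negative player $i$ it reverses the solitaire game (in $C_n$) and reverses both games $w_{ij}^\pm$ precisely for those $j$ from which $i$ earns a nonzero net contribution (equivalently, the $j$ earning net $0$ from $i$), checking directly that this negates $i$'s score while leaving the other scores fixed. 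Your argument is shorter and more structural; the paper's is more hands-on but stays entirely within the ``reverse some games'' paradigm used elsewhere in \cref{S_CharCD}.

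One small remark on your closing sentence: the sign-flip $\sigma$ \emph{does} preserve $B_n$ as well, so your argument would go through there too (with ${\bf s}\in(\Z+1/2)^n$). The separate treatment of $B_n$ in \cref{S_CharB} is driven by the half-integer lattice condition, not by any failure of $\sigma$-invariance; it is only $A_{n-1}$ for which $\sigma$ genuinely fails to preserve $\Phi$.
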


\begin{proof}
Let $\cT$ be a Coxeter tournament with score sequence $|{\bf s}|$. 
We will construct $\cT'$ with score sequence ${\bf s}$ by, roughly speaking, 
reversing some of the games involving 
weak players $i$ in $\cT$ such that $s_i<0$. 
By relabelling players, we may assume that $s_i<0$ for all $i\le i_0$
and $s_i\ge0$ for $i>i_0$. 
We let $w_{ij}^\pm$ and $w_{ij}^\ell$ denote the games in $\cT$, 
and $w_{ij}'^\pm$ and $w_{ij}'^\ell$ the games in $\cT'$. 

Consider player $i$ for $i \le i_0$. 
Let $A_i$ be the set of all players $j\neq i$ 
which earn a net $0$ number of points from player $i$. 
Note that this occurs if and only if 
$w_{ij}^+ = w_{ij}^- = 1$ (resp.\ $=0$), in which case
player $i$ earns net $1$ (resp.\ $-1$) point from player $j$. 
We reverse the outcome of all such games. That is, we put 
$w_{ij}'^\pm=1-w_{ij}^\pm$
for all $j\in A_i$. Note that, after this reversal, player $j$ 
will still earn net $0$ points from player $i$. 
Also, if $\Phi=C_n$, 
we reverse the outcome of its solitaire game, $w_i'^\ell= 1-w_i^\ell$.
All other games 
are unchanged. 

Note that, by construction, the score of each player $i\le i_0$ switches from 
$|s_i|$ to $-|s_i|=s_i$, 
and the score of each other player $i>i_0$ is unchanged, $|s_i|=s_i$. 
Therefore, $\cT'$ has score sequence ${\bf s}$, as desired. 
\end{proof}

\begin{lemma}\label{L_construction_evenjumps}
Suppose that $\Phi=C_n$ or $D_n$. 
If ${\bf z}\in\score(\Phi)$ and ${\bf s} \in \Z^n$
satisfy
\[ 
{\bf 0}_n\le {\bf s}\le {\bf z} \quad \text{and} \quad s_i \equiv z_i \Mod{2}
\]
for all $i\in[n]$, 
then ${\bf s}\in\score(\Phi)$. 
\end{lemma}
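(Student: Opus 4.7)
The plan is to start from a Coxeter tournament $\cT$ realizing ${\bf z}$ (which exists by hypothesis) and iteratively reverse small groups of games so as to decrement individual score coordinates by $2$, until reaching a tournament whose score sequence is ${\bf s}$. Since $d_i := z_i - s_i \ge 0$ is even for each $i$, such a stepwise reduction will suffice.

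The key operation is a \emph{pair swap}: given vertices $i \ne j$ in $[n]$ with $w_{ij}^- = w_{ij}^+ = 1$ (i.e., player $i$ has won both the competitive and the collaborative game with player $j$), we simultaneously flip both outcomes to $0$. A direct check, using that the pair $\{i,j\}$ contributes $w_{ij}^- + w_{ij}^+ - 1 \in \{-1,0,+1\}$ to $z_i$ and $w_{ij}^+ - w_{ij}^- \in \{-1,0,+1\}$ to $z_j$, shows that the swap decreases $z_i$ by exactly $2$ and leaves every other score coordinate, including $z_j$, unchanged.

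The main technical point is to verify that such a pair swap is available whenever the current tournament has $z_i > s_i$ for some $i$. Under this hypothesis, the parity condition $z_i \equiv s_i \Mod{2}$ together with $z_i > s_i \ge 0$ forces $z_i \ge 2$. Writing $z_i$ as the sum of the $n-1$ pair contributions from the other vertices, plus (in type $C_n$ only) the loop contribution $2w_i^\ell - 1 \in \{-1,+1\}$, each pair contribution lies in $\{-1,0,+1\}$ and the loop term is at most $+1$. If no pair contribution equalled $+1$, then $z_i \le 1$, contradicting $z_i \ge 2$. Hence some $j$ with $w_{ij}^- = w_{ij}^+ = 1$ exists, at which the swap can be performed.

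The proof then concludes by induction on $\sum_i d_i$, a non-negative even integer. The base case $\sum_i d_i = 0$ gives ${\bf z} = {\bf s}$ and is trivial. For the inductive step, pick any $i$ with $d_i > 0$ and apply a pair swap at $i$; the resulting tournament has score sequence ${\bf z}' = {\bf z} - 2{\bf e}_i \in \score(\Phi)$ (as it is realized by a Coxeter tournament) and still satisfies ${\bf 0}_n \le {\bf s} \le {\bf z}'$ with $s_j \equiv z'_j \Mod{2}$ for every $j$, while $\sum_j (z'_j - s_j) = \sum_j d_j - 2$. The inductive hypothesis applied to ${\bf z}'$ and ${\bf s}$ then yields the required tournament realizing ${\bf s}$. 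The only real subtlety is the availability argument for the pair swap, which is resolved by the parity-forced bound $z_i \ge 2$.
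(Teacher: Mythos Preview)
Your proof is correct and follows essentially the same approach as the paper: both rely on the key operation of reversing the two games $w_{ij}^{\pm}$ at a pair where $i$ has won both, which lowers $z_i$ by $2$ and leaves every other coordinate unchanged. The paper carries out all the needed reversals in one batch (for each $i$ it selects a subset $M_i$ of size $(z_i-s_i)/2$ from the set $P_i$ of such $j$, using the bound $z_i\le |P_i|$, and in type $C_n$ also allows reversing the loop), whereas you do the same thing iteratively, one swap at a time, by induction on $\sum_i(z_i-s_i)$; your availability argument via $z_i\ge 2$ is the one-step version of the paper's bound, and incidentally shows that the loop reversal is never strictly needed.
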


\begin{proof}
Suppose that $\cT$ has score sequence ${\bf z}$. 
We will modify the games of $\cT$ to obtain a tournament $\cT'$ with 
score sequence ${\bf s}$. 

For each player $i$, let $P_i$ be the set of players $j$ from which player $i$ 
gains $1$ point. That is, $j\in P_i$ if player $i$ wins 
both of its competitive and collaborative
games with player $j$. Note that player $j$ 
gains net 0 points from $i$. If $\Phi=C_n$, then we also include $\infty\in P_i$ if 
player $i$ wins its solitaire game, $w_i^\ell=1$.
In this context, we think of a solitaire win as equivalent to 
competitive and collaborative wins with player $\infty$,
whose score is not included in ${\bf z}$. 
Observe that $z_i \le |P_i|$, since, by the choice of $P_i$, 
the net contribution of points from all other games
towards $z_i$ is non-positive. 

To obtain $\cT'$, select an arbitrary subset $M_i \subseteq P_i$ of size $(z_i-s_i)/2$. 
We can do this since $z_i \equiv s_i \Mod{2}$ and $0\le s_i\le z_i$, so 
\[ 
\frac{z_i-s_i}{2} \le z_i \le |P_i|.
\]
Next, to obtain $\cT'$, 
we reverse the outcome of all games in 
$\cT$ between $i$ and players $j\in M_i$. 
Recall that, by the choice of $P_i$ each such reversal has a 
net $-2$ effect on the score of player $i$, and a 
net 0 effect
on the score of player $j$. 
As a result of this construction, in $\cT'$ each player $i$ scores 
\[
z_i-2\cdot \frac{z_i-s_i}{2}=s_i
\]
points in total. That is, $\cT'$ has score sequence ${\bf s}$, as desired. 
\end{proof}

\begin{lemma}\label{L_construction_Dfinal}
Let ${\bf s} \in \Z^n$ be such that 
${\bf s} \preceq {\bf w}_n$.
Then ${\bf s}\in\score(D_n)$.
\end{lemma}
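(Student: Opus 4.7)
The plan is to reduce the $D_n$ construction directly to the classical Landau theorem for graph tournaments on $K_n$. The key observation is that in type $D_n$ there are no solitaire games, so $\delta_{D_n} = 0$ and \eqref{E_sw} gives ${\bf s}_{D_n} = {\bf w}_n$. Thus the hypothesis ${\bf s} \preceq {\bf w}_n$ is nothing but Landau's exact criterion in disguise. To justify invoking Landau, I would first check that ${\bf s} \in \Z_{\ge 0}^n$: majorization forces $\sum_i s_i = \binom{n}{2}$, and the standard fact that ${\bf x} \preceq {\bf y}$ implies $\min {\bf x} \ge \min {\bf y}$ (applied with ${\bf y} = {\bf w}_n \ge {\bf 0}_n$) gives $s_i \ge 0$ for every $i$. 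So by Landau's theorem, see \eqref{E_wins}, there exists a classical graph tournament $T$ on $K_n$ whose win sequence is ${\bf s}$.

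Next I would upgrade $T$ to a $D_n$-Coxeter tournament $\cT$ on $\cK_{D_n}$ via the canonical recipe: orient each competitive (negative) edge as in $T$, and declare every collaborative (positive) edge a collective win, so that $w_{ij}^+ = 1$ for all $i > j$. The verification that ${\bf s}(\cT) = {\bf s}$ is then a one-line computation: player $i$ accumulates $s_i - (n-1)/2$ from its $s_i$ competitive wins and $n-1-s_i$ competitive losses, together with $(n-1)/2$ from its $n-1$ collaborative wins, which sum to $s_i$.

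There is no genuine obstacle here. The only mildly subtle point is the step $\min {\bf s} \ge 0$, which is a standard majorization fact. Conceptually the construction works because uniformly winning every collaborative game adds the constant $(n-1)/2$ to each player's score, precisely cancelling the half-point offset between graph-tournament ``win counts'' and $D_n$-``scores''. The analogous constructions in types $C_n$ and $B_n$ will require more delicate bookkeeping to handle parity modulo $2$ and solitaire games, but in the $D_n$ case the problem collapses to Landau's theorem plus a trivial uniform collaborative assignment.
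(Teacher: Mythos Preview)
Your proposal is correct and follows essentially the same approach as the paper: invoke Landau's theorem to realize ${\bf s}$ as the win sequence of a classical tournament $T$, then lift to a $D_n$-tournament by copying the competitive games from $T$ and declaring every collaborative game a win. Your explicit verification of the score computation and of non-negativity via $\min {\bf s} \ge \min {\bf w}_n$ adds welcome detail that the paper leaves implicit.
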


\begin{proof}
First we note that
${\bf s}$ is attainable 
as a {\it win} sequence of a classical (type $A_{n-1}$) tournament $T$. 
We can construct a $D_n$-tournament $\cT$ with {\it score} sequence ${\bf s}$
by setting all $w_{ij}^+=1$ in $\cT$, and setting $w_{ij}^-=1$ in $\cT$ if and only if 
$w_{ij}=1$ in $T$, for $i > j$. 
That is, all collaborative games are won in $\cT$ and the results of competitive
games in $\cT$ are the same as in $T$. 
\end{proof}

\begin{lemma}\label{L_construction_Cfinal}
If ${\bf s} \in \Z^n$ and ${\bf s} \preceq {\bf s}_{C_n} = {\bf w}_n + {\bf 1}_n$, 
then ${\bf s} \in \score(C_n)$.
\end{lemma}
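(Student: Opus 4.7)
The plan is to reduce the $C_n$ case to the type $D_n$ result already established in \cref{L_construction_Dfinal}, by viewing a $C_n$-tournament as a $D_n$-tournament augmented with a fully-won collection of loop solitaire games. The key observation is that each won loop $e_i^\ell$ contributes $(1-1/2)\cdot 2{\bf e}_i = {\bf e}_i$ to the score, so turning every loop on adds exactly ${\bf 1}_n$ to the overall score.

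First I would set ${\bf t} = {\bf s} - {\bf 1}_n \in \Z^n$ and verify that ${\bf t} \preceq {\bf w}_n$. Since translation by a constant preserves the decreasing rearrangement, ${\bf t}^\downarrow = {\bf s}^\downarrow - {\bf 1}_n$, so for every $1\le k\le n$,
\[
\sum_{i=1}^k ({\bf t}^\downarrow)_i = \sum_{i=1}^k ({\bf s}^\downarrow)_i - k \le \sum_{i=1}^k ({\bf s}_{C_n}^\downarrow)_i - k = \sum_{i=1}^k ({\bf w}_n^\downarrow)_i,
\]
using the hypothesis ${\bf s} \preceq {\bf s}_{C_n} = {\bf w}_n + {\bf 1}_n$, with equality at $k=n$. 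This is precisely the majorization ${\bf t} \preceq {\bf w}_n$.

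I would then invoke \cref{L_construction_Dfinal} to obtain a type $D_n$ Coxeter tournament $\cT'$ on $\cK_{D_n}$ with score sequence ${\bf t}$, and extend it to a tournament $\cT$ on $\cK_{C_n}$ by preserving every competitive and collaborative outcome of $\cT'$ and setting $w_i^\ell = 1$ for each $i \in [n]$. The loop contribution ${\bf 1}_n$ brings the total to ${\bf s}(\cT) = {\bf t} + {\bf 1}_n = {\bf s}$, proving ${\bf s} \in \score(C_n)$. The argument is essentially a one-line reduction so there is no real obstacle; note in passing that ${\bf s} \preceq {\bf s}_{C_n}$ automatically forces ${\bf s} \ge {\bf 1}_n$, since ${\bf t} \preceq {\bf w}_n$ makes ${\bf t}$ a classical win sequence by \eqref{E_wins} and hence non-negative, which keeps the construction entirely consistent.
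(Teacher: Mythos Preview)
Your proof is correct and follows exactly the same approach as the paper's: shift down by ${\bf 1}_n$, apply \cref{L_construction_Dfinal}, then win every loop solitaire game to shift back up. Your version simply spells out the translation-invariance of majorization and the loop contribution more explicitly than the paper does.
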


\begin{proof}
The conditions imply ${\bf s} - {\bf 1}_n \preceq {\bf w}_n$. 
By \cref{L_construction_Dfinal}, we can construct a $D_n$-tournament 
with score sequence ${\bf s} - {\bf 1}_n$. 
Adding a solitaire loop game win to every player 
yields a $C_n$-tournament with score sequence ${\bf s}$.
\end{proof}

With these results in hand, we can complete the proof of
\cref{T_MainLan} in the case $C_n$ and $D_n$. 

\begin{proof}[Proof of \cref{T_MainLan} (if part, types $C_n$ and $D_n$)]
Let $\Phi=C_n$ or $D_n$ and 
suppose that ${\bf s}$ satisfies 
\begin{itemize}[nosep]
\item $|{\bf s}| \preceq_w {\bf  s}_\Phi$, 
\item ${\bf s} \in \Z^n$,
\item $\sum_i s_i \equiv \sum_i ({\bf  s}_\Phi)_i={n\choose2}+\delta_\Phi n \Mod{2}$. 
\end{itemize}
We claim that 
there exists a Coxeter tournament $\cT$ with score sequence ${\bf s}$. 
To see this, first note that, by \cref{L_construction_if_positive_done}, 
we may assume that ${\bf 0}_n\le {\bf s}$, in which case $|{\bf s}|={\bf s}$. 
Then, by \cref{L_construction_leq,L_construction_strongleq}, it follows that 
there is a sequence ${\bf z}'$ such that
\[ 
{\bf 0}_n \le {\bf s} \le {\bf z}' \preceq {\bf s}_\Phi 
\quad \text{and} 
\quad s_i \equiv z_i' \Mod{2}
\]
for every $i \in [n]$. Hence, by \cref{L_construction_evenjumps}, 
it suffices to verify the existence of a Coxeter tournament 
with score sequence ${\bf z}'$, 
which follows by \cref{L_construction_Dfinal,L_construction_Cfinal}, 
completing the proof.
\end{proof}

The above proofs are constructive. See \cref{F_ex} for an example. 

\begin{figure}[h!]
\centering
\includegraphics[scale = 0.85]{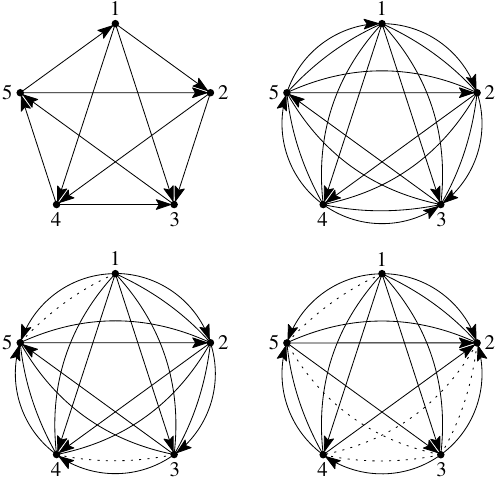}
\caption{
To find a 
$D_5$-tournament with ${\bf s}=(3,-2,-1,0,0)$
(bottom right), it suffices by \cref{L_construction_if_positive_done}
to find a $D_5$-tournament with score sequence $|{\bf s}|$
(bottom left). 
To this end, using \cref{L_construction_leq,L_construction_strongleq}, 
we find 
an $A_{4}$-tournament with ${\bf z}' = (3,2,1,2,2)$
(top left) and then a $D_5$-tournament with score sequence ${\bf z}'$
(top right), where 
${\bf z} = (3,2,2,2,1)$.}
\label{F_ex}
\end{figure}

\subsection{Sufficiency in $B_n$}
\label{S_CharB}

Finally, we address the remaining case of type $B_n$. 
The overall idea is to reduce to type $D_n$. 
Given ${\bf s}$, we first determine the result
of the half edge solitaire games. 
We begin by 
attempting to let each {\it strong/weak} 
player $i$ with a positive/negative 
$s_i$ win/lose
its solitaire game. 
If the remaining sequence ${\bf s}'$ 
of points yet to win
is in $\score(D_n)$ then we are done. 
Otherwise, we reverse the solitaire game
for the most neutral player, and argue that 
the adjusted ${\bf s}''$ is in $\score(D_n)$.

\begin{proof}[Proof of \cref{T_MainLan} (if part, type $B_n$)]
Let $\Phi=B_n$ and suppose that ${\bf s}$ satisfies 
\begin{itemize}[nosep]
\item $|{\bf s}|\preceq_w {\bf s}_\Phi={\bf w}_n+{\bf 1}_n/2$, 
\item ${\bf s}\in(\Z+1/2)^n$.
\end{itemize}
Without loss of generality, we may assume 
that $|s_1|\le\cdots\le |s_n|$. 

Consider the sequences
\begin{align*}
{\bf s}' &= (s_1 - \eps_1/2,s_2 - \eps_2/2, \ldots, s_n - \eps_n/2),\\
{\bf s}'' &= (s_1 + \eps_1/2,s_2 - \eps_2/2, \ldots, s_n - \eps_n/2),
\end{align*}
where $\eps_i = \sgn(s_i) \neq 0$.  
As discussed above, these sequences can be interpreted as 
sequences of points yet to win after all solitaire games
have been played. Therefore, it suffices to show that
either ${\bf s}'$ or ${\bf s}''$ is in $\score(D_n)$. 

If $\sum s_i' \equiv {n \choose 2} \Mod{2}$, 
then by \cref{T_MainLan} it follows that  
${\bf s}'\in \score(D_n)$. 
Otherwise, 
if $\sum s_i' \not\equiv {n \choose 2} \Mod{2}$, then  $\sum |s_i'| < {n\choose2}$. 
Therefore, $|{\bf s}''| \preceq_w {\bf w}_n$. 
Furthermore, note that 
$\sum s_i'' \equiv 1+\sum s_i'\equiv {n \choose 2} \Mod{2}$. 
Hence ${\bf s}''\in \score(D_n)$.  
\end{proof}

\section{Counting generators}
\label{S_count}

In this section, we give two proofs of the Coxeter counting 
formulas \cref{T_MainCount}. The first proof in 
\cref{S_count_alg} is algebraic. The second proof
in \cref{S_embed} is combinatorial, via a sequence
of constructive embeddings.

\subsection{Counting algebraically}
\label{S_count_alg}

Let $\cT=\{w_e:e\in E(\cK_\Phi)\}$ be a Coxeter tournament
on $\cK_\Phi$, with score sequence ${\bf s}={\bf s}(\cT)$. 
For ${\bf e}\in \Phi^+$, put 
$\hat  {\bf e}=(2w_e-1){\bf e}$. Then 
\[
{\bf s}=\sum_{{\bf e}\in\Phi^+}\hat  {\bf e}/2.
\]
Recall (see \eqref{E_weylvec} above) that the standard score sequence ${\bf s}_\Phi$
corresponds to the tournament in which all $w_e=1$. 

In this section, we show that 
\[
\frac{\| {\bf s}_\Phi\|^2-\| {\bf s}\|^2}{2} 
\]
counts the number of type $\Phi$ generators in $\cT$, 
with neutral clovers (only in $C_n$) counted twice. 

To this end, first note that all $\| {\bf e} \|^2=\|  \hat  {\bf e} \|^2$, 
since $\hat  {\bf e}=\pm{\bf e}$. 
Hence 
\[
\frac{\| {\bf s}_\Phi\|^2-\| {\bf s}\|^2}{2}
=\frac{\left\| \sum {\bf e} \right\|^2-\left\| \sum \hat  {\bf e} \right\|^2}{8}
=\sum_{{\bf e}\neq {\bf e}'}
\frac{\langle{\bf e},{\bf e}' \rangle-\langle \hat  {\bf e}, \hat {\bf e}'\rangle}{8}. 
\]

In fact, it suffices to sum over ${\bf e}\neq{\bf e}'$ which involve 
exactly one common player. Indeed, if they have no common player, then clearly
$\langle{\bf e},{\bf e}' \rangle=0$, 
since then for every $i\in[n]$ at least one of ${\bf e},{\bf e}'$
has $i$th entry equal to $0$. 
On the other hand, if they have two players in common, 
then 
$\langle {\bf e}_{ij}^-,{\bf e}_{ij}^+ \rangle=0$.

\subsubsection{Counting in $D_n$}
In type $D_n$ there are no solitaire games.
After some rearranging of terms, 
it can be seen that 
\begin{equation}\label{E_sumalphas}
\sum_{{\bf e}\neq {\bf e}'}
\frac{\langle{\bf e},{\bf e}' \rangle-\langle \hat  {\bf e}, \hat {\bf e}'\rangle}{8}
=\sum_{i>j>k} \frac{\psi_{ijk}-\hat \psi_{ijk}}{4},
\end{equation}
where
\begin{align*}
\psi_{ijk}
&=
\langle {\bf e}_{ij}^-,{\bf e}_{ik}^-\rangle+\langle {\bf e}_{ij}^-,{\bf e}_{jk}^-\rangle+\langle {\bf e}_{ik}^-,{\bf e}_{jk}^-\rangle\\
&\enspace+
\langle {\bf e}_{ij}^-,{\bf e}_{ik}^+\rangle+\langle {\bf e}_{ij}^-,{\bf e}_{jk}^+\rangle+\langle {\bf e}_{ik}^+,{\bf e}_{jk}^+\rangle\\
&\enspace+
\langle {\bf e}_{ij}^+,{\bf e}_{ik}^-\rangle+\langle {\bf e}_{ij}^+,{\bf e}_{jk}^+\rangle+\langle {\bf e}_{ik}^-,{\bf e}_{jk}^+\rangle\\
&\enspace+
\langle {\bf e}_{ij}^+,{\bf e}_{ik}^+\rangle+\langle {\bf e}_{ij}^+,{\bf e}_{jk}^-\rangle+\langle {\bf e}_{ik}^+,{\bf e}_{jk}^-\rangle,
\end{align*}
and with 
$\hat  \psi_{ijk}$ similarly defined. 
Recalling that all $\| {\bf e} \|^2=\| \hat  {\bf e} \|^2$, it follows that 
\[
2(\psi_{ijk}-\hat   \psi_{ijk})=
\delta_{ijk}^{---}
+\delta_{ijk}^{-++}
+\delta_{ijk}^{+-+}
+\delta_{ijk}^{++-},
\]
where
\[
\delta_{ijk}^{\sigma_{ij}\sigma_{ik}\sigma_{jk}}
=\| {\bf e}_{ij}^{\sigma_{ij}}+{\bf e}_{ik}^{\sigma_{ik}}+{\bf e}_{jk}^{\sigma_{jk}}\|^2 
-\| \hat  {\bf e}_{ij}^{\sigma_{ij}}+\hat  {\bf e}_{ik}^{\sigma_{ik}}+\hat  {\bf e}_{jk}^{\sigma_{jk}}\|^2.
\]

We claim that the first term
\[
\delta_{ijk}^{---}
= 8\cdot [\Delta_c]_{ijk},
\]
where $[\Delta_c]_{ijk}$ counts the number of copies of the 
cyclic triangle $\Delta_c$ in the sub-tournament induced on vertices $\{i,j,k\}$.
To see this, 
we observe that, in all possible cases, 
$\| \hat  {\bf e}_{ij}^-+\hat  {\bf e}_{ik}^-+\hat  {\bf e}_{jk}^-\|^2=0$
if the three games  
form a
cyclic triangle, and otherwise 
$\| \hat  {\bf e}_{ij}^-+\hat  {\bf e}_{ik}^-+\hat  {\bf e}_{jk}^-\|^2=8$. 
See \cref{F_countDd}. 
In particular, for $i>j>k$, 
${\bf e}_{ij}^-+{\bf e}_{ik}^-+{\bf e}_{jk}^-=2({\bf e}_i-{\bf e}_k)$, 
and so 
$\| {\bf e}_{ij}^-+{\bf e}_{ik}^-+{\bf e}_{jk}^-\|^2=8$. 

\begin{figure}[h!]
\centering
\includegraphics[scale = 1]{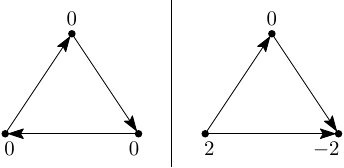}
\caption{Up to symmetries, there are two possible orientations of the 
negative edges on three vertices. The tournament on the left is neutral and 
the tournament on the right is not. The net wins by each vertex is given.}
\label{F_countDd}
\end{figure}

Similarly, it can be seen that 
\[
\delta_{ijk}^{-++}+\delta_{ijk}^{+-+}+\delta_{ijk}^{++-}
=8\cdot [\Delta_b]_{ijk}, 
\]
where $[\Delta_b]_{ijk}$ counts the number of copies of the balanced
triangle $\Delta_b$ on $\{i,j,k\}$. 
These 
terms account for the three possible locations of the negative edge in a balanced
triangle. 
See \cref{F_countDb}.

\begin{figure}[h!]
\centering
\includegraphics[scale = 1]{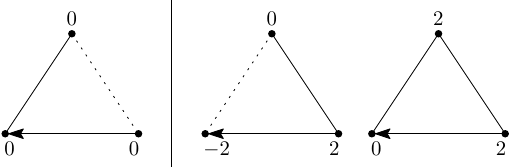}
\caption{Up to symmetries (including reversal), 
there are three possible orientations of one 
negative edge and two positive edges involving three vertices. 
The tournament on the left is neutral and 
the tournaments on the right are not. The net wins by each vertex is given.}
\label{F_countDb}
\end{figure}

Summing over $i>j>k$, we find that, in type $D_n$,  
\begin{equation}\label{E_Dcount}
\frac{\| {\bf s}_\Phi\|^2-\| {\bf s}\|^2}{2}
=[\Delta_c]+[\Delta_b]
\end{equation}
counts the total number of copies of $\Delta_c$ and $\Delta_b$
in $\cT$. 

\begin{remark}\label{Rem_CountD}
    If $\hat {\bf s}_{ijk} = \hat {\bf e}_{ij}^- + \hat {\bf e}_{ij}^+ + \hat {\bf e}_{ik}^- + \hat {\bf e}_{ik}^+ + \hat {\bf e}_{jk}^- + \hat {\bf e}_{jk}^+,$
    then
    \begin{equation*}
        \psi_{ijk} - \hat  \psi_{ijk} = \frac{\|{\bf s}_{D_3}\|^2 - \|\hat {\bf s}_{ijk}\|^2}{2}. 
    \end{equation*}
    Hence terms in the sum \eqref{E_sumalphas} 
    are equal to the quantity from \cref{T_MainCount},  applied to the sub-tournament 
    $\cT_{ijk}\subseteq \cT$
    induced by $\{i,j,k\}$. 
    Therefore, alternatively, to prove \cref{T_MainCount} 
    for type $D_n$ it is enough to show its validity 
    for all Coxeter tournaments of type $D_3$.
\end{remark}

\subsubsection{Counting in $B_n$}
In type $B_n$, there are half edge solitaire games. 
As such, 
in addition to the sum \eqref{E_sumalphas}, 
there is a term 
\begin{equation}\label{E_CountAlgBterm}
\sum_{i>j} \frac{\psi_{ij}^h-\hat \psi_{ij}^h}{4},
\end{equation}
where
\[
\psi_{ij}^h
=
\langle {\bf e}_i^h, {\bf e}_{ij}^-\rangle 
+ \langle {\bf e}_i^h, {\bf e}_{ij}^+\rangle 
+ \langle {\bf e}_j^h, {\bf e}_{ij}^-\rangle 
+ \langle {\bf e}_j^h, {\bf e}_{ij}^+\rangle
=\langle {\bf e}_i^h+{\bf e}_j^h,{\bf e}_{ij}^-+{\bf e}_{ij}^+\rangle,
\]
with $\hat  \psi_{ij}^h$ similarly defined. 

There is a unique neutral pair on $\{i,j\}$ if and only if 
$\hat  \psi_{ij}^h=-2$, 
and there are no neutral pairs on $\{i,j\}$ if and only if $\hat  \psi_{ij}^h=2$. 
See \cref{F_countB}.
In particular, ${\bf e}_i^h+{\bf e}_j^h={\bf e}_i+{\bf e}_j$
and ${\bf e}_{ij}^-+{\bf e}_{ij}^+=2{\bf e}_i$, 
and so $\psi_{ij}^h=2$. Hence
\[
\sum_{i>j} \frac{\psi_{ij}^h-\hat \psi_{ij}^h}{4}
=[\Omega_1]+[\Omega_2]+[\Omega_3]
\]
counts the total number of neutral pairs in $\cT$. 
As such, in type $B_n$, 
\begin{equation}\label{E_Bcount}
\frac{\| {\bf s}_\Phi\|^2-\| {\bf s}\|^2}{2}
=[\Delta_c]+[\Delta_b]+[\Omega_1]+[\Omega_2]+[\Omega_3],
\end{equation}
as claimed. 

\begin{figure}[h!]
\centering
\includegraphics[scale = 1]{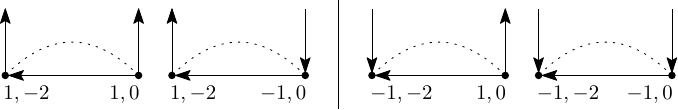}
\caption{Up to symmetries (including reversal), there are four $B_2$-tournaments. 
The tournaments on the left contain a neutral pair and 
the tournaments on the right do not. The net wins by each vertex from its solitaire
and competitive/collaborative games are given.}
\label{F_countB}
\end{figure}

\begin{remark} If 
    $\hat {\bf s}_{ij} = \hat {\bf e}_{ij}^- + \hat {\bf e}_{ij}^+ + \hat {\bf e}_{i}^h + \hat {\bf e}_{j}^h,$
    then
    \begin{equation*}
        \psi_{ij}^h - \hat  \psi_{ij}^h = \frac{\|{\bf s}_{B_2}\|^2 - \|\hat {\bf s}_{ij}\|^2}{2}.
    \end{equation*}
    As such, each term of the sum \eqref{E_CountAlgBterm} corresponds to the sub-tournament 
    $\cT_{ij}\subseteq \cT$
    induced by $\{i,j\}$. 
    An analogous relation occurs in type $C_n$ below.
\end{remark}

\subsubsection{Counting in $C_n$}
The argument in type $C_n$ is essentially the same as in type $B_n$, 
except that there are solitaire loop games, and ${\bf e}_i^\ell=2{\bf e}_i$. 
It can be seen that, in this case, 
in addition to the sum \eqref{E_sumalphas}, 
there is a term 
\[
\sum_{i>j} \frac{\psi_{ij}^\ell-\hat \psi_{ij}^\ell}{4}
=2([\Theta_1]+[\Theta_2])
\]
equal to twice the number of neutral clovers in $\cT$. 
The equality follows from analogous argument as in $B_n$, see \cref{F_countC}. 
Therefore, in type $C_n$, 
\begin{equation}\label{E_Ccount}
\frac{\| {\bf s}_\Phi\|^2-\| {\bf s}\|^2}{2}
=[\Delta_c]+[\Delta_b]+2([\Theta_1]+[\Theta_2]),
\end{equation}
as claimed. 

\begin{figure}[h!]
\centering
\includegraphics[scale = 1]{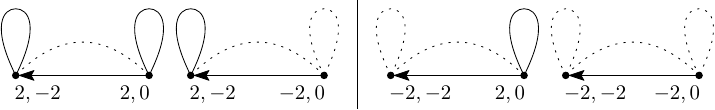}
\caption{Up to symmetries (including reversal), there are four $C_2$-tournaments. 
The tournaments on the left contain a neutral clover and 
the tournaments on the right do not. The net wins by each vertex from its solitaire
and competitive/collaborative games are given.}
\label{F_countC}
\end{figure}

\subsection{Counting combinatorially}
\label{S_embed}

In this section, we sketch an alternative proof of \cref{T_MainCount}. 
Put 
\[
\vartheta(\Phi)=\|{\bf s}_\Phi\|^2/2.
\]
The basis for the arguments in this section are the equalities 
\begin{align*}
\vartheta(A_{n-1})&=n(n-1)(n+1)/24,\\
\vartheta(B_n)&=n(2n-1)(2n+1)/24,\\
\vartheta(C_n)&=n(n+1)(2n+1)/12,\\
\vartheta(D_n)&=n(n-1)(2n-1)/12, 
\end{align*}
which imply
\begin{align}
\label{E_BtoA}
2\vartheta(B_n) &= \vartheta(A_{2n-1}),\\ 
\label{E_CtoA}
2 \vartheta(C_n) &= \vartheta(A_{2n}),\\
\label{E_DtoA}
2\vartheta(D_n) &= \vartheta(A_{2n}) - n^2. 
\end{align} 
As suggested by these identities, 
a Coxeter tournament $\cT$
can be embedded as a specific graph tournament $T=T(\cT)$.

\subsubsection{From $B_n$ to $A_{2n-1}$}
\label{S_BtoA}

Suppose that $\cT$ is a tournament of type $B_n$ with score sequence
${\bf s}(\cT)=(s_1,\ldots,s_n)$. We construct a tournament $T$ of type $A_{2n-1}$ 
with score sequence 
\begin{equation}\label{E_score_BtoA}
{\bf s}(T)=(s_1,\ldots,s_n,-s_1,\ldots,-s_n).
\end{equation}
By \eqref{E_triangles} and \eqref{E_BtoA},
the number of copies of $\Delta_c$ in $T$ satisfies 
\[
[\Delta_c]_{T}
=2\vartheta(B_n)-\sum_{i=1}^n s_i^2. 
\]
Therefore \eqref{E_Bcount} will follow, if we can show that
\begin{equation}\label{E_BtoA2}
[\Delta_c]_\cT + [\Delta_b]_\cT + [\Omega_1]_\cT + [\Omega_2]_\cT + [\Omega_3]_\cT = \frac{1}{2} [\Delta_c]_{T}.
\end{equation}
That is, we need to show that $T$ has twice as many copies of 
$\Delta_c$  
as the total number of generators in $\cT$. 

To describe the construction, we label the $2n$ players in $T$
using $[\pm n]=\{\pm1,\ldots,\pm n\}$. 
\begin{itemize}[nosep]
\item If $i$ wins/loses its (half edge) solitaire game in $\cT$, then $i$ wins/loses
its (competitive) game against $-i$ in $T$. 
\item If $i$ wins/loses its competitive game against $j$ in $\cT$, then in $T$\\
(1) $i$ wins/loses its game against $j$ and\\
(2) $-i$ loses/wins its game against $-j$.
\item If $i$ and $j$ win/lose their collaborative game in $\cT$, then in $T$\\
(1) $i$ wins/loses its game against $-j$ and \\
(2) $j$ wins/loses its game against $-i$. 
\end{itemize}
Note that $T$ is {\it antisymmetric,}
in the sense that the 
reversal of $T$ is the same as $T$ viewed upside down.
Also note that, by construction, players $\pm i$ earn a score of $\pm s_i$ in $T$, 
as in \eqref{E_score_BtoA} above. 
See \cref{F_BtoAemb}. 

\begin{figure}[h!]
\centering
\includegraphics[scale=0.9]{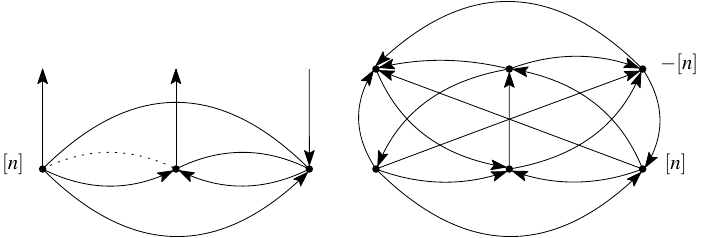}
\caption{A tournament $\cT$ of type $B_n$, 
and its 
antisymmetric
embedding as a tournament $T$ of type $A_{2n-1}$. 
Players with positive/negative labels are in the bottom/top. 
}
\label{F_BtoAemb}
\end{figure}

Next, we sketch the reasons for  \eqref{E_BtoA2}. 
First, observe that for any $\{u,v,w\}\subseteq [\pm n]$, if there is a cyclic  triangle on 
$\{u,v,w\}$ in $T$ then there is also an antisymmetric cyclic triangle on 
$-\{u,v,w\}$ with reversed orientation. 
As such, we may split all cyclic triangles in $T$ into antisymmetric pairs, 
accounting for the $1/2$ in   \eqref{E_BtoA2}. 

We claim that the generators in $\cT$ are in bijective correspondence 
with pairs of antisymmetric cyclic triangles in $T$, in the following way. 
\begin{enumerate}[nosep]
\item 
There is a cyclic triangle in $\cT$ on $\{i,j,k\}$ 
if and only if both triangles on 
$\{i,j,k\}$ and $-\{i,j,k\}$ are cyclically directed in $T$.
\item 
There is a balanced triangle in $\cT$ on vertices $\{i,j,k\}$ involving collaborative games 
between $i,j$ and $j,k$ if and only if 
both triangles on $\{i,-j,k\}$ and $\{-i,j,-k\}$ are cyclically directed in $T$. 
\item 
There is a neutral pair in $\cT$ on $\{i,j\}$ if and only if 
there are two cyclic triangles in the subgraph of $T$ induced on
$\{\pm i,\pm j\}$.
\end{enumerate}

The details of the bijection are omitted. 
Instead, see \cref{F_BtoAgen1,F_BtoAgen2} for the overall idea.

\begin{figure}[h!]
\centering
\includegraphics[scale=1]{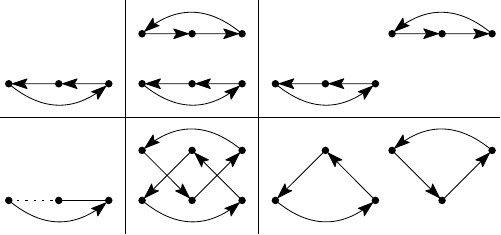}

\caption{Illustration of cases (1) and (2) in the bijective correspondence between
generators in $\cT$ and antisymmetric pairs of cyclic triangles in 
the embedding $T$. The first/second row is the 
case of a cyclic/balanced triangle in $\cT$. 
}
\label{F_BtoAgen1}
\end{figure}

\begin{figure}[h!]
\centering
\includegraphics[scale=1]{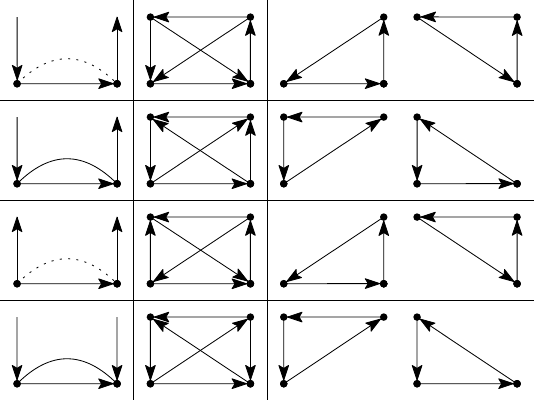}
\caption{Illustration of case (3) in the bijective correspondence. 
Each row is one of the four ways that $\cT$ can 
contain a neutral pair $\Omega_i$. 
}
\label{F_BtoAgen2}
\end{figure}

\subsubsection{From $D_n$ to $A_{2n}$}
\label{S_DtoA}

In a natural way, we can adapt the embedding from $B_n$ to $A_{2n-1}$, 
constructed in 
\cref{S_BtoA}, in order to obtain an embedding of a type $D_n$ 
tournament $\cT$
as a type $A_{2n}$ tournament $T$.
Since type $D_n$ has no solitaire games, we add an extra player $2n+1$. 
For each $i\in[n]$ we put a cyclic triangle on $\{\pm i,2n+1\}$, directed from 
$i$ to $-i$. Note that these
games have net 0 effect on the scores of players $\pm i$. Otherwise, 
the embedding acts on competitive and collaborative games
in the same way as in \cref{S_BtoA}. 
See \cref{F_DtoAemb}. 

\begin{figure}[h!]
\centering
\includegraphics[scale=0.9]{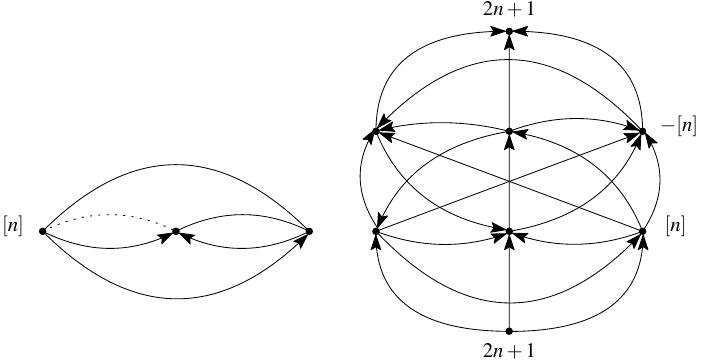}
\caption{A tournament $\cT$ of type $D_n$, 
and its embedding as a tournament $T$ of type $A_{2n}$. 
At right, the top and bottom vertices 
labelled $2n+1$
are the same, 
but drawn in this way for visual clarity. 
}
\label{F_DtoAemb}
\end{figure}

As in \cref{S_BtoA}, the construction gives
\[
{\bf s}(T)=(s_1,\ldots,s_n,-s_1,\ldots,-s_n,0). 
\]
Also, as before, 
every neutral triangle in $\cT$ corresponds to a pair of antisymmetric neutral triangles 
in $T$. In addition to these, there are also $n^2$ {\it layer crossing} cyclic triangles. 
Indeed, there are $n$ cyclic triangles of the form 
$\{\pm i,2n+1\}$, and for $i \neq j \in [n]$ 
there are exactly two cyclic triangles of one of the three forms: 
either 
\begin{itemize}
\item $\{i, -j, 2n+1\}$ and $\{-i, j, 2n+1\}$, or
\item $\{\pm i,j\}$ and $\{\pm i, -j\}$, or 
\item $\{i,\pm j\}$ and $\{-i, \pm j\}$.
\end{itemize}

Therefore,
\[
[\Delta_c]_\cT + [\Delta_b]_\cT 
= \frac{1}{2}\left([\Delta_c]_{T} - n^2\right).
\]
Then, applying \eqref{E_triangles} and \eqref{E_DtoA}, 
it follows that 
\[
[\Delta_c]_\cT + [\Delta_b]_\cT
=\vartheta(D_n)-\frac{1}{2}\sum_{i=1}^n s_i^2,
\]
yielding \eqref{E_Dcount}.

\subsubsection{From $C_n$ to $A_{2n}$}
\label{S_CtoA}

Finally, we note that the embeddings in \cref{S_BtoA,S_DtoA}
can also be modified to obtain a direct embedding from 
$C_n$ into $A_{2n}$. The only difference is that if $i\in[n]$ wins/loses its
loop solitaire game in $\cT$ then $i$ 
wins/loses against $-i$ and $2n+1$, and $2n+1$ wins/loses against $-i$. 
Then, as in \cref{S_DtoA}, 
\[
{\bf s}(T)=(s_1,\ldots,s_n,-s_1,\ldots,-s_n,0). 
\]
See \cref{F_CtoAemb}. 

\begin{figure}[h!]
\centering
\includegraphics[scale=0.9]{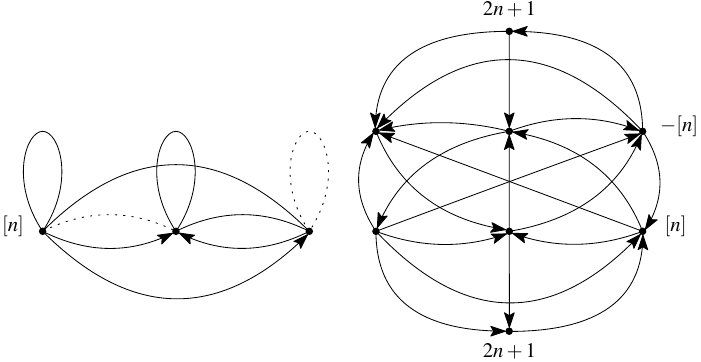}
\caption{A tournament $\cT$ of type $C_n$, 
and its embedding as a tournament $T$ of type $A_{2n}$. 
}
\label{F_CtoAemb}
\end{figure}

Then \eqref{E_Ccount} follows, noting that there is a 
one-to-two correspondence between generators in $\cT$ and generators in $T$.

\section{Acknowledgements}
We thank 
Federico Ardila, 
Christina Goldschmidt, 
James Martin and 
Oliver Riordan for helpful conversations. 
BK was supported by a 
Florence Nightingale Bicentennial Fellowship (Oxford Statistics)
and a Senior Demyship (Magdalen College). 
This publication is based on work partially supported 
(RM) by the EPSRC Centre for Doctoral Training in 
Mathematics of Random Systems: 
Analysis, Modelling and Simulation (EP/S023925/1). 
TP is supported by the Additional Funding Programme for Mathematical Sciences, 
delivered by EPSRC (EP/V521917/1) 
and the Heilbronn Institute for Mathematical Research.

\makeatletter
\renewcommand\@biblabel[1]{#1.}
\makeatother

\providecommand{\bysame}{\leavevmode\hbox to3em{\hrulefill}\thinspace}
\providecommand{\MR}{\relax\ifhmode\unskip\space\fi MR }
\providecommand{\MRhref}[2]{%
  \href{http://www.ams.org/mathscinet-getitem?mr=#1}{#2}
}
\providecommand{\href}[2]{#2}

\end{document}